%

\documentclass[11pt]{article}
\usepackage{amssymb,amsmath,amsthm,multicol}
\usepackage[dvipdfmx]{graphicx}
\allowdisplaybreaks

\newtheorem{theorem}{Theorem}[]
\newtheorem{lemma}{Lemma}[]
\newtheorem*{lemma*}{Lemma}
\newtheorem{proposition}{Proposition}[]

\theoremstyle{definition}
\newtheorem{definition}{Definition}[]
\newtheorem{remark}{Remark}[]

\newcommand{\bldL}[1]{\mbox{\boldmath $#1$}}
\newcommand{\bld}[1]{\mbox{\boldmath \scriptsize{$#1$}}}

\makeatletter
 \def\eqnarray{%
 \stepcounter{equation}%
 \let\@currentlabel=\theequation
 \global\@eqnswtrue
 \global\@eqcnt\z@
 \tabskip\@centering
 \let\\=\@eqncr
 $$\halign to \displaywidth\bgroup\@eqnsel\hskip\@centering
 $\displaystyle\tabskip\z@{##}$&\global\@eqcnt\@ne
 \hfil$\displaystyle{{}##{}}$\hfil
 &\global\@eqcnt\tw@$\displaystyle\tabskip\z@{##}$\hfil
 \tabskip\@centering&\llap{##}\tabskip\z@\cr}
\makeatother

\setlength{\oddsidemargin}{0.46cm}
\setlength{\evensidemargin}{0.46cm}
\setlength{\textheight}{19.7cm}
\setlength{\textwidth}{15cm}
\title{On a distribution function of 
a probability measure involving a permutation}
\author{Yuichi KAMIYA \\
        Faculty of Economics \\
        Daito-Bunka University \\
        560 Iwadono, Higashi-Matsuyama \\
        Saitama 355-8501, Japan \\
        Email: ykamiya@ic.daito.ac.jp
   \and Tatsuya OKADA \\
        Department of Natural Sciences \\
        Fukushima Medical University \\
        Fukushima 960-1295, Japan \\
        Email: tokada@fmu.ac.jp
   \and Takeshi SEKIGUCHI \\
        Department of Infomation Science \\
        Faculty of Liberal Arts and Science \\
        Tohoku Gakuin University \\
        Sendai 981-3193, Japan \\
        Email: johmo@jcom.home.ne.jp
   \and Yasunobu SHIOTA \\ 
        Department of Infomation Science \\
        Faculty of Liberal Arts and Science \\
        Tohoku Gakuin University \\
        Sendai 981-3193, Japan \\
        Email: shiota@cs.tohoku-gakuin.ac.jp
       }
\date{\today}
\begin{document}
\maketitle
\begin{abstract}
In \cite{KOSS}, we have introduced a probability measure 
to study the power and exponential sums for a certain coding system.  
The distribution function of the probability measure gives 
explicit formulas for the power and exponential sums.  

\cite[Theorem 4]{KOSS} states that the higher order derivatives  
of the distribution function with respect to a certain parameter 
are expressed by a generalization of the Takagi function. 
In \cite{KOSS}, we only gave the sketch of the proof of Theorem 4, 
because the complete proof is very long. 
The purpose of this paper is to give the complete proof of 
\cite[Theorem 4]{KOSS}. 
\end{abstract}
\section{Introduction}\label{pm} 
Let $q\ge 2$ be an integer and $\sigma$ be a permutation
\[
\sigma= 
\Big(
\begin{array}{cccc}
0 & 1 & \cdots & q-1 \\   
\sigma(0) & \sigma(1) & \cdots & \sigma(q-1)
\end{array}
\Big).
\]
Throughout the paper, we assume that $\sigma^{q}={\rm id}$. 
A probability measure involving $\sigma$ on the unit interval
has been introduced in \cite{KOSS}. 
Let us recall the definition of the measure.

Let $I=I_{0}(0)=[0,1]$, and for each positive integer $k$, let  
\begin{eqnarray*}
  I_{k}(n) &=& \bigl[\frac{n}{q^k},\frac{n+1}{q^k}\bigl),\ \ 
                0 \leq n \leq q^{k}-2, \\
  I_{k}(q^{k}-1) &=& \bigl[\frac{q^{k}-1}{q^k},1\bigl]. 
\end{eqnarray*}
We denote the $\sigma$-field $\sigma\{I_{k}(n); 0 \leq n \leq q^{k}-1\}$ 
by $\mathcal{F}_{k}$ and the $\sigma$-field 
$\vee_{k=0}^{\infty}\mathcal{F}_{k}$ by ${\mathcal F}$. 

\begin{definition}\label{measure}
Let ${\bldL d}=(d_{0},\ldots ,d_{q-2})$ be a vector with 
$0<d_{j}<1$ $(0 \leq j \leq q-2)$ and $0<\sum_{j=0}^{q-2}d_{j}<1$,  
and set $d_{q-1}=1-\sum_{j=0}^{q-2}d_{j}$. 
Let ${\bldL r}=(r_{0},\ldots ,r_{q-2})$ be a vector whose components 
satisfy the same conditions as those of ${\bldL d}$, and 
set $r_{q-1}=1-\sum_{j=0}^{q-2}r_{j}$. 
Then the probability measure 
$\mu_{{\bld d},{\bld r}}$ involving a permutation $\sigma$
on $(I,\mathcal{F})$ is defined as follows. 

\noindent
{\rm (i)} $\mu_{{\bld d},{\bld r}}(I)=1$,

\noindent
{\rm (ii)} $\mu_{{\bld d},{\bld r}}(I_{1}(n))=d_{n}$, 
\quad $0\le n\le q-1$,

\noindent
{\rm (iii)} for $k\ge 2$, 
$$
\mu_{{\bld d},{\bld r}}(I_{k}(n))
=\mu_{{\bld d},{\bld r}}(I_{k-1}(j))\times r_{\sigma^{j}(l)},
\quad 0\le n\le q^{k}-1,
$$
\noindent
where $j$ and $l$ are integers with $n=qj+l$ $(0\le l\le q-1)$. 
The distribution function $L_{{\bld d},{\bld r}}$ of 
$\mu_{{\bld d},{\bld r}}$ is defined by 
\[
  L_{{\bld d},{\bld r}}(x)=\mu_{{\bld d},{\bld r}}([0,x]),\quad x\in I.
\]
For simplicity, we use the abbreviation $L_{\bld r}$
for $L_{{\bld r},{\bld r}}$.
\end{definition}

The measure $\mu_{{\bld d},{\bld r}}$
is a generalization of the multinomial measure (see \cite{O-S-S}) 
and the Gray measure (see \cite{Kob}). 

There is an interesting relation between $L_{\bld r}(x)$ 
and the exponential sum for a certain coding system 
related to paperfolding sequences (see \cite[Theorem 1]{KOSS}). 
Moreover, since $L_{\bld r}(x)$ is an analytic function of
${\bldL r}$ (see \cite[Theorem 2]{KOSS}), 
the power sums for the coding system are related to 
the higher order derivatives of $L_{\bld r}(x)$ with respect to 
${\bldL r}$ (see \cite[Theorem 3]{KOSS}). 

\cite[Theorem 4]{KOSS} states that the higher order derivatives  
of $L_{\bld r}(x)$ with respect to ${\bldL r}$ 
are expressed by a generalization of the Takagi function. 
To describe \cite[Theorem 4]{KOSS}, we prepare several notations.
Let ${\bldL q}$, ${\bldL e}_{l}$, and ${\bldL u}$ be vectors with 
\begin{eqnarray*}
  {\bldL q}    &=&(\underbrace{1/q,\ldots,1/q}_{q-1}), \\ 
  {\bldL e}_{l}&=&(\underbrace{0,\ldots,0,\overset{l}1,0\ldots,0}_{q-1}),
  \quad 0\le l\le q-2, \\
  {\bldL u}    &=& (u_{0},\ldots,u_{q-2}),\quad u_{l}\in{\bf N}\cup\{0\}, 
\end{eqnarray*}
and define 
\[
  |{\bldL u}|=u_{0}+u_{1}+\cdots+u_{q-2}, \quad
  {\bldL u}!=\prod_{l=0}^{q-2}u_{l}!. 
\]
For $n\in {\bf N}\cup\{0\}$, let ${\bldL r}_{\sigma^{n}}$ 
be the vector with 
\[
  {\bldL r}_{\sigma^{n}}=(r_{\sigma^{n}(0)},\ldots ,r_{\sigma^{n}(q-2)}). 
\]
For a set $S$, let ${\bf 1}_{S}$ be the indicator function of $S$.
Define the function $\Phi_{l}$ on $I$ by 
\[
\Phi_{l}=\sum_{j=0}^{q-1}{\bf 1}_{I_{2}(qj+\sigma^{-j}(l))},
\quad 0 \le l\le q-1. 
\]
Let $\phi(x)$ be the function on $I$ such that
$\phi(x)=qx \pmod{1}$ with $0\le\phi(x)<1$ for $x\in[0,1)$ 
and $\phi(1)=1$. We use the notation 
\[
 f\circ\phi^{j}(x)=f(\underbrace{\phi(\phi(\cdots\phi}_{j}(x))))
\]
for any function $f$. 
We denote the Lebesgue measure on $I$ by $\mu$.

\begin{definition} \label{gTakagi}
The generalized Takagi function  
${\mathcal T}_{{\bld d},{\bld r},{\bld u}}(x)$ is defined as follows. 

\noindent
(i) If ${\bldL u}={\bldL e}_{l}$, then 
\begin{align*}
{\mathcal T}_{{\bld d},{\bld r},{\bld e}_{l}}(x)
  =\frac{1}{q}\sum_{j=0}^{\infty}\sum_{n=0}^{q^{j}-1}
   \mu_{{\bld d},{\bld r}}(I_{j}(n)){\bf 1}_{I_{j}(n)}(x)
   \int_{0}^{\phi^{j}(x)}\Big(\frac{\Phi_{l}}{r_{l}}
  -\frac{\Phi_{q-1}}{r_{q-1}}\Big)d\mu_{{\bld r}_{\sigma^{n}},{\bld r}}. 
\end{align*}

\noindent
(ii) If $|{\bldL u}|\ge 2$, then
\begin{align*}
{\mathcal T}_{{\bld d},{\bld r},{\bld u}}(x)
  =& \sum_{j=0}^{\infty}
     \sum_{\stackrel{\scriptstyle{\alpha=0}}{u_{\alpha}>0}}^{q-2}
     \Big(\Big(\frac{\Phi_{\alpha}}{r_{\alpha}}
    -\frac{\Phi_{q-1}}{r_{q-1}}\Big)\circ\phi^{j}(x)\Big)\\
   & \times\sum_{n=0}^{q^{j+1}-1}
     \mu_{{\bld d},{\bld r}}(I_{j+1}(n)){\bf 1}_{I_{j+1}(n)}(x)
     \Big(
     {\mathcal T}_{{\bld r}_{\sigma^{n}},{\bld r},{\bld u}-{\bld e}_{\alpha}}
     \circ\phi^{j+1}(x)\Big). 
\end{align*}
\end{definition}

Then the higher order derivatives of $L_{\bld r}(x)$ with respect to 
${\bldL r}$ are expressed as the following. 

\begin{theorem} {\rm (\cite[Theorem 4]{KOSS})} \label{takagiTH}
{\rm (i)} If ${\bldL u}={\bldL e}_{l}$, then
\begin{eqnarray*}
  \frac{1}{q}\frac{\partial}{\partial r_{l}}L_{\bld r}(x)
   &=&({\bf 1}_{I_{1}(l)}(x) - {\bf 1}_{I_{1}(q-1)}(x))
      (L_{{\bld q},{\bld r}}(x)-x)\\ 
   & &+ \Big(\sum_{n=0}^{q-1}r_{n}{\bf 1}_{I_{1}(n)}(x)\Big)
             q{\mathcal T}_{{\bld q},{\bld r},{\bld e}_{l}}(x)
     +\int_{0}^{x}({\bf 1}_{I_{1}(l)}-{\bf 1}_{I_{1}(q-1)})d\mu.
\end{eqnarray*}

\noindent
{\rm (ii)} If $|{\bldL u}|\ge 2$, then
\begin{eqnarray*}
  \frac{1}{q{\bldL u}!}
  \frac{\partial^{u_{0}+\cdots +u_{q-2}}}
       {\partial r_{0}^{u_{0}}\cdots \partial r_{q-2}^{u_{q-2}}}
       L_{\bld r}(x)
   &=& \sum_{\stackrel{\scriptstyle{j=0}}{u_{j}>0}}^{q-2}
     ({\bf 1}_{I_{1}(j)}(x) - {\bf 1}_{I_{1}(q-1)}(x))
     q{\mathcal T}_{{\bld q},{\bld r},{\bld u}-{\bld e}_{j}}(x) \\
   & &+ \Big(\sum_{n=0}^{q-1}r_{n}{\bf 1}_{I_{1}(n)}(x)\Big)
     q{\mathcal T}_{{\bld q},{\bld r},{\bld u}}(x).
\end{eqnarray*}
\end{theorem}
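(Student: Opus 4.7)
The key identity driving the proof is a self-similarity of the measure: because $\sigma^{q}=\mathrm{id}$, one checks by comparing the recursive definitions that for every $x\in I_{1}(n)$,
$$
L_{\bld d,\bld r}(x)=\sum_{m=0}^{n-1}d_{m}+d_{n}\,L_{\bld r_{\sigma^{n}},\bld r}(\phi(x)).
$$
Setting $\bld d=\bld r$ gives the corresponding functional equation for $L_{\bld r}$. The plan is to differentiate it in $\bld r$ (remembering that $r_{q-1}=1-\sum_{j\le q-2}r_{j}$, so $\partial r_{j}/\partial r_{l}=\delta_{jl}-\delta_{j,q-1}$), unwind the resulting recursion into an infinite series, and identify that series with the definition of $\mathcal{T}_{\bld q,\bld r,\bld u}$.

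For part (i), $\partial L_{\bld r}/\partial r_{l}$ naturally decomposes into a first-slot contribution and a second-slot contribution coming from $L_{\bld d,\bld r}|_{\bld d=\bld r}$. Using the self-similarity of $L_{\bld q,\bld r}$ to rewrite $L_{\bld r_{\sigma^{n}},\bld r}(\phi(x))$ as $q(L_{\bld q,\bld r}(x)-x)+\phi(x)$ on $I_{1}(n)$, the first-slot contribution is shown to equal exactly $(\mathbf{1}_{I_{1}(l)}-\mathbf{1}_{I_{1}(q-1)})(L_{\bld q,\bld r}-x)+\int_{0}^{x}(\mathbf{1}_{I_{1}(l)}-\mathbf{1}_{I_{1}(q-1)})\,d\mu$. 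The second-slot contribution takes the form $r_{n}\cdot(\text{chain-rule expansion at }\phi(x))$ on $I_{1}(n)$; iterating this recursion, the Leibniz rule produces at each step the weights $\Phi_{l}/r_{l}-\Phi_{q-1}/r_{q-1}$ on level-$2$ subintervals (since $\sigma^{-n}(l)$ marks the digit whose measure contributes $r_{l}$), and the accumulated prefactors $r_{n_{0}}r_{n_{1}}\cdots$ combined with the uniform $1/q$ factors from $L_{\bld q,\bld r}$ reassemble into $\sum_{n}r_{n}\mathbf{1}_{I_{1}(n)}(x)\cdot q\mathcal{T}_{\bld q,\bld r,\bld e_{l}}(x)$. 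A remainder tends to zero by $\mu_{\bld d,\bld r}(I_{k}(n))\to 0$ as $k\to\infty$.

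For part (ii) I would proceed by induction on $|\bld u|$, with part (i) as the base case. A preliminary lemma I would establish is a one-step recursion for $\mathcal{T}_{\bld d,\bld r,\bld u}$ itself on each $I_{1}(n)$, read directly off Definition \ref{gTakagi}. Applying $\frac{1}{q\,\bld u!}\frac{\partial^{|\bld u|}}{\partial r_{0}^{u_{0}}\cdots\partial r_{q-2}^{u_{q-2}}}$ to the functional equation for $L_{\bld r}$ and using the multivariate Leibniz rule yields two kinds of surviving terms: (a) terms in which some differentiations hit the prefactor $r_{n}$, which by the inductive hypothesis collect into $\sum_{\alpha:u_{\alpha}>0}(\mathbf{1}_{I_{1}(\alpha)}-\mathbf{1}_{I_{1}(q-1)})\,q\mathcal{T}_{\bld q,\bld r,\bld u-\bld e_{\alpha}}$; and (b) terms in which all differentiations act on $L_{\bld r_{\sigma^{n}},\bld r}(\phi(x))$, which the inductive hypothesis together with the above Takagi recursion reassemble into $(\sum_{n}r_{n}\mathbf{1}_{I_{1}(n)})\,q\mathcal{T}_{\bld q,\bld r,\bld u}$.

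The hardest step will be the symbolic bookkeeping. The chain-rule expansion of $\partial L_{\bld r_{\sigma^{n}},\bld r}/\partial r_{l}$ is intricate because $\bld r$ enters through both slots, and because the constraint $r_{q-1}=1-\sum_{j}r_{j}$ couples differentiation in $r_{l}$ with the $\sigma^{-n}(q-1)$-coordinate of $\bld r_{\sigma^{n}}$; in addition, the $\sigma^{-j}(l)$ inside $\Phi_{l}$ must be carefully aligned with the conjugated indices generated by iterated recursions. Verifying the precise cancellations that produce exactly the three terms in (i) and the two in (ii) is tedious, and is presumably why \cite{KOSS} gave only a sketch.
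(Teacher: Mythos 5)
Your starting functional equation is correct, and your analysis of the ``first-slot'' terms in part (i) reproduces what the paper obtains from Lemmas \ref{basechange} and \ref{HDchange2}; up to that point the two arguments coincide. The divergence, and the gap, is in how you treat the remaining term $\partial_{r_{l}}L_{{\bld q},{\bld r}}(x)$ and its higher-order analogues. You propose to iterate the self-similarity and let the chain rule generate the series defining ${\mathcal T}_{{\bld q},{\bld r},{\bld u}}$, but one step of that iteration turns $L_{{\bld q},{\bld r}}(x)$ into $L_{{\bld r}_{\sigma^{n}},{\bld r}}(\phi(x))$, whose \emph{first} slot now depends on ${\bldL r}$; rewriting it back in terms of $L_{{\bld q},{\bld r}}$ returns you to your starting point, and tracking the first-slot dependence through the iteration is precisely the bookkeeping you defer at the end. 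This is not removable tedium: the theorem's right-hand side contains only ${\mathcal T}_{{\bld q},{\bld r},\cdot}$, so you must show that the spurious first-slot derivative terms cancel or reassemble, and your proposal gives no mechanism for that; likewise your induction for (ii) invokes the inductive hypothesis for derivatives of $L_{{\bld r}_{\sigma^{n}},{\bld r}}$, which the theorem as stated does not cover, so the induction does not close without a strengthened statement you have not formulated. The paper sidesteps all of this by never differentiating a composite with ${\bldL r}$ in the first slot: it represents $L_{{\bld q},{\bld s}}(x)$ as $\lim_{k}\int_{0}^{x}\prod_{i}({\mathcal K}_{{\bld s},i}/{\mathcal K}_{{\bld r},i})\,d\mu_{{\bld q},{\bld r}}$ via Propositions \ref{weak} and \ref{Wproduct}, differentiates in ${\bldL s}$ alone, and only then sets ${\bldL s}={\bldL r}$ (Lemma \ref{Hsrq}).

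Two further steps that your plan asserts but does not supply constitute most of the paper: (a) the identification of the resulting series with the recursively defined ${\mathcal T}_{{\bld d},{\bld r},{\bld u}}$, which requires the vanishing-conditional-expectation lemmas (Lemmas \ref{difflem1}--\ref{difflem3}) and the substitution lemmas (Lemmas \ref{phiE}, \ref{phiEx}) to localize the integrals and produce the recursion of Proposition \ref{diffprop1}; and (b) the convergence statements --- that ${\mathcal T}_{{\bld d},{\bld r},{\bld u}}$ is even well defined by the infinite recursion of Definition \ref{gTakagi}(ii), and that the truncations converge to it uniformly (Lemmas \ref{appro00} and \ref{last00}) so that the limit may be interchanged with differentiation. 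Your only remark on convergence, that $\mu_{{\bld d},{\bld r}}(I_{k}(n))\to 0$, controls the undifferentiated remainder but not the differentiated one; term-by-term differentiation of the infinite iteration needs uniform-in-${\bldL r}$ estimates of the kind the paper develops. As written, your proposal is a plausible alternative outline whose deferred steps are exactly the content this paper exists to supply.
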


In \cite{KOSS}, we only gave the sketch of the proof of the above 
Theorem 1, because the complete proof is very long. 
The purpose of this paper is to give the complete proof of Theorem 1.

Finally, we mention the previous works on studying relations 
between higher order derivatives of distribution functions 
and Takagi functions. 
Hata--Yamaguti \cite{HY} is the first work clarifying  
a relation between the first order derivative of $L_{\bld r}(x)$ 
with respect to ${\bldL r}$ and the usual Takagi function 
in the dyadic case. 
In \cite{S-S}, Hata--Yamaguti's result is studied 
from a viewpoint of the binomial measure, 
and, in \cite{O-S-S},
it is generalized in the $q$-adic case, 
in which the multinomial measure 
and its distribution function play essential roles. 
In Kobayashi \cite{Kob}, the Gray measure and its distribution function 
are studied from a viewpoint of \cite{S-S} and \cite{O-S-S}. 
Since the measure $\mu_{{\bld d},{\bld r}}$
is a generalization of the multinomial measure and Gray measure, 
Theorem 1 is a natural generalization of the results obtained in 
\cite{HY}, \cite{S-S}, \cite{O-S-S}, and \cite{Kob}.

\section{Preliminary lemmas}

For a fixed $k\in{\bf N}$, any integer $n$ with $0\le n\le q^{k}-1$ 
is expressed as
$n=\sum_{i=0}^{k-1}n_{i}q^{i}$, where 
$n_{i}\in\{0,1,\ldots q-1\}$. We use the abbreviation 
$n=n_{k-1}\cdots n_{0}$ for $n=\sum_{i=0}^{k-1}n_{i}q^{i}$,
in which the length of the word is always equal to $k$,
and identify $I_{k}(n)$ with $I_{k}(n_{k-1}\cdots n_{0})$.  

Firstly, we study a relation between $\phi^{i}$ 
and $\mu_{{\bld d},{\bld r}}$. 
We note a simple fact
\begin{align*}
I_{i+k}(b_{i-1}\cdots b_{0}a_{k-1}\cdots a_{0})
\subset I_{i}(c_{i-1}\cdots c_{0})
\Leftrightarrow
b_{i-1}\cdots b_{0}=c_{i-1}\cdots c_{0}.
\end{align*}

\begin{lemma}\label{phiaction}
We have  
\[
\phi^{i}\Big(\bigcup_{0 \leq  b_{0}, \cdots, b_{i-1} \leq q-1} 
I_{i+k}(b_{i-1}\cdots b_{0}a_{k-1}\cdots a_{0})\Big)
=I_{k}(a_{k-1}\cdots a_{0}).
\]
\end{lemma}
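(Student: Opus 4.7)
The plan is to induct on $i$, exploiting the fact that $\phi$ acts as the left-shift on base-$q$ expansions. Specifically, for $b\in\{0,\ldots,q-1\}$ and $n=a_{k-1}\cdots a_0$, every $x\in I_{1+k}(b\,a_{k-1}\cdots a_0)$ satisfies $qx\in[b+n/q^{k},\,b+(n+1)/q^{k})$, so that $\phi(x)=qx-b\in I_k(a_{k-1}\cdots a_0)$. This one-step shift will be the engine driving both the base case and the induction step.

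For the base case $i=1$, I would unfold
\[
I_{1+k}(b_0\,a_{k-1}\cdots a_0)=\Big[\frac{b_0}{q}+\frac{n}{q^{1+k}},\ \frac{b_0}{q}+\frac{n+1}{q^{1+k}}\Big)
\]
(closed on the right exactly when $b_0=q-1$ and $n=q^k-1$), and apply $\phi(x)=qx-b_0$ to see that the image equals $I_k(a_{k-1}\cdots a_0)$ save, possibly, for the right endpoint $1$, which is attained only when $b_0=q-1$. Taking the union over $b_0$ then restores that endpoint and yields $I_k(a_{k-1}\cdots a_0)$ exactly.

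For the induction step, I regroup the length-$(i+1+k)$ word $b_i b_{i-1}\cdots b_0\,a_{k-1}\cdots a_0$ as a length-$i$ prefix $b_{i-1}\cdots b_0$ followed by a length-$(k+1)$ suffix $b_i a_{k-1}\cdots a_0$. Since the set-image commutes with unions, the induction hypothesis applied at level $i$ with $k$ replaced by $k+1$ gives, for each fixed $b_i$,
\[
\phi^{i}\Big(\bigcup_{b_0,\ldots,b_{i-1}}I_{i+k+1}(b_{i-1}\cdots b_0\,b_i\,a_{k-1}\cdots a_0)\Big)=I_{k+1}(b_i\,a_{k-1}\cdots a_0).
\]
Unioning over $b_i$ and applying $\phi$ once more, using $\phi^{i+1}=\phi\circ\phi^{i}$, reduces the problem to the base case and delivers $I_k(a_{k-1}\cdots a_0)$, completing the induction.

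The main nuisance is keeping track of the closed right endpoint at $x=1$: because $I_k(q^k-1)$ is closed while every other $I_k(n)$ is half-open, one must verify that $1$ lies in the image exactly when it should. This is handled uniformly by noting that the topmost cell $I_{i+k}(q^{i+k}-1)$ is itself closed on the right and that $\phi(1)=1$, so the endpoint is always correctly accounted for once the full union over the prefix is taken.
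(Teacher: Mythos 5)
Your proof is correct and follows essentially the same route as the paper's: both rest on the observation that one application of $\phi$ deletes the leading base-$q$ digit of the word, iterated $i$ times (the paper does this directly, you package it as an induction on $i$ with the relabelled prefix). Your explicit bookkeeping of the closed right endpoint at $x=1$, which the paper's one-line argument passes over silently, is a welcome extra precaution and does not change the substance of the argument.
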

\begin{proof}
By the definition of $\phi$, we have 
\begin{eqnarray*}
 & & \phi\Big(
         \bigcup_{0 \leq  b_{0}, \cdots, b_{i-1} \leq q-1}
            I_{i+k}(b_{i-1}\cdots b_{0}a_{k-1}\cdots a_{0})
      \Big)\\
 & & = \bigcup_{0 \leq  b_{0}, \cdots, b_{i-2} \leq q-1}
       I_{i-1+k}(b_{i-2}\cdots b_{0}a_{k-1}\cdots a_{0}).
\end{eqnarray*}
Repeating this $i$ times, we obtain the assertion. 
\end{proof}

Lemma \ref{phiaction} is equivalent to the following. 

\begin{lemma}\label{phicom}
We have
\[
 {\bf 1}_{I_{k}(a_{k-1}\cdots a_{0})}\circ\phi^{i}
  ={\bf 1}_{\bigcup_{0 \leq b_{0},\cdots, b_{i-1} \leq q-1}
    I_{i+k}(b_{i-1}\cdots b_{0}a_{k-1}\cdots a_{0})}.
\]
\end{lemma}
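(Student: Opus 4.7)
The plan is to exploit the observation, recorded by the authors, that this lemma is merely a restatement of Lemma \ref{phiaction}. For any subset $A\subset I$ and any $x\in I$, ${\bf 1}_A\circ\phi^i(x)=1$ if and only if $\phi^i(x)\in A$; so the claimed equality of indicator functions is equivalent to the pointwise set-membership equivalence
\[
\phi^i(x)\in I_{k}(a_{k-1}\cdots a_{0})\iff x\in\bigcup_{0\le b_{0},\ldots,b_{i-1}\le q-1} I_{i+k}(b_{i-1}\cdots b_{0}a_{k-1}\cdots a_{0}),
\]
and this is what I would actually verify.

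For the direction $(\Leftarrow)$, I would apply Lemma \ref{phiaction} directly: if $x$ belongs to the union on the right, then $\phi^{i}(x)$ lies in the image of that union under $\phi^{i}$, which by Lemma \ref{phiaction} is exactly $I_{k}(a_{k-1}\cdots a_{0})$. For the converse $(\Rightarrow)$, I would rely on the fact that the family $\{I_{i+k}(n):0\le n\le q^{i+k}-1\}$ partitions $I$. Given $x$, there is a unique index, which I write in the form $n=b_{i-1}\cdots b_{0}c_{k-1}\cdots c_{0}$, with $x\in I_{i+k}(n)$. Lemma \ref{phiaction}, now applied with the tail digits $c_{k-1}\cdots c_{0}$ in place of $a_{k-1}\cdots a_{0}$, then yields $\phi^{i}(x)\in I_{k}(c_{k-1}\cdots c_{0})$, and disjointness of the intervals $\{I_{k}(m):0\le m\le q^{k}-1\}$ forces $c_{k-1}\cdots c_{0}=a_{k-1}\cdots a_{0}$, so $x$ lies in the union on the right.

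I do not expect any genuine obstacle; the argument is really a bookkeeping transcription of Lemma \ref{phiaction}. The only minor points that warrant attention are the endpoint conventions: the right endpoint $x=1$ is handled by the definition $\phi(1)=1$ together with the closed right end of $I_{k}(q^{k}-1)$, and the half-open intervals appearing in Lemma \ref{phiaction} are chosen precisely so that the families $\{I_{i+k}(n)\}$ and $\{I_{k}(m)\}$ are exact partitions of $I$ with no overlap. Once these conventions are invoked, the pointwise equivalence above is immediate and the equality of indicator functions follows.
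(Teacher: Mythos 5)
Your proof is correct and follows the same route as the paper, which simply asserts that Lemma \ref{phicom} is equivalent to Lemma \ref{phiaction} without further argument; you merely make the bookkeeping explicit (the pointwise set-membership reformulation, the partition/disjointness argument for the forward direction, and the endpoint conventions). Nothing further is needed.
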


\begin{lemma}\label{phimeasure}
We have
\[
\mu_{{\bld d},{\bld r}}(I_{i+k}(b_{i-1}\cdots b_{0}a_{k-1}\cdots a_{0}))
=\mu_{{\bld d},{\bld r}}(I_{i}(b_{i-1}\cdots b_{0}))
\mu_{{\bld r}_{\sigma^{b_{0}}},{\bld r}}(I_{k}(a_{k-1}\cdots a_{0})).
\]
\end{lemma}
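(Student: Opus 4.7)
The natural approach is induction on $k$ with $i\ge 1$ held fixed, using clause (iii) of Definition \ref{measure} to peel off the lowest-order digit $a_{0}$. The one nontrivial observation is that whenever clause (iii) produces a factor $r_{\sigma^{j}(l)}$ in which $j$ is the integer value of some digit word, the hypothesis $\sigma^{q}={\rm id}$ lets us replace $\sigma^{j}$ by $\sigma$ raised to the last digit of that word (i.e., $j\bmod q$). Once this observation is in hand, both sides of the equality reduce to the same product by straightforward bookkeeping.

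For the base case $k=1$, clause (iii) applied to $\mu_{{\bld d},{\bld r}}(I_{i+1}(b_{i-1}\cdots b_{0}a_{0}))$ yields $\mu_{{\bld d},{\bld r}}(I_{i}(b_{i-1}\cdots b_{0}))\cdot r_{\sigma^{j}(a_{0})}$ with $j=(b_{i-1}\cdots b_{0})_{q}$; since $j\equiv b_{0}\pmod q$, this equals $\mu_{{\bld d},{\bld r}}(I_{i}(b_{i-1}\cdots b_{0}))\cdot r_{\sigma^{b_{0}}(a_{0})}$. On the other hand, because $\sigma^{b_{0}}$ is a permutation, the derived $(q-1)$-th component of ${\bldL r}_{\sigma^{b_{0}}}$ is $1-\sum_{l=0}^{q-2}r_{\sigma^{b_{0}}(l)}=r_{\sigma^{b_{0}}(q-1)}$, so clause (ii) applied to $\mu_{{\bldL r}_{\sigma^{b_{0}}},{\bldL r}}$ gives $\mu_{{\bldL r}_{\sigma^{b_{0}}},{\bldL r}}(I_{1}(a_{0}))=r_{\sigma^{b_{0}}(a_{0})}$ uniformly in $a_{0}\in\{0,\ldots,q-1\}$, which matches.

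For the inductive step from $k$ to $k+1$, apply clause (iii) to peel $a_{0}$ off $\mu_{{\bld d},{\bld r}}(I_{i+k+1}(b_{i-1}\cdots b_{0}a_{k}\cdots a_{0}))$, obtaining $\mu_{{\bld d},{\bld r}}(I_{i+k}(b_{i-1}\cdots b_{0}a_{k}\cdots a_{1}))\cdot r_{\sigma^{a_{1}}(a_{0})}$ after the same modular reduction. The induction hypothesis rewrites the first factor as $\mu_{{\bld d},{\bld r}}(I_{i}(b_{i-1}\cdots b_{0}))\cdot\mu_{{\bldL r}_{\sigma^{b_{0}}},{\bldL r}}(I_{k}(a_{k}\cdots a_{1}))$. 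Finally, a third application of clause (iii), this time to the measure $\mu_{{\bldL r}_{\sigma^{b_{0}}},{\bldL r}}$, shows that $\mu_{{\bldL r}_{\sigma^{b_{0}}},{\bldL r}}(I_{k}(a_{k}\cdots a_{1}))\cdot r_{\sigma^{a_{1}}(a_{0})}=\mu_{{\bldL r}_{\sigma^{b_{0}}},{\bldL r}}(I_{k+1}(a_{k}\cdots a_{0}))$, completing the induction.

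The only place requiring genuine care, and arguably the main obstacle, is verifying that the two applications of clause (iii) in the inductive step produce the \emph{same} reduced exponent $\sigma^{a_{1}}$: one from reducing $(b_{i-1}\cdots b_{0}a_{k}\cdots a_{1})_{q}\bmod q$ and the other from reducing $(a_{k}\cdots a_{1})_{q}\bmod q$. Both are congruent to $a_{1}\pmod q$, so the modular reductions do agree, and the rest is mechanical.
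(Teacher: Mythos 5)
Your proof is correct and takes essentially the same route as the paper's: both unwind the defining recursion (iii) of $\mu_{{\bld d},{\bld r}}$ one digit at a time, using $\sigma^{q}={\rm id}$ to replace each exponent by the last base-$q$ digit of the quotient, so that both sides reduce to $\mu_{{\bld d},{\bld r}}(I_{i}(b_{i-1}\cdots b_{0}))\,r_{\sigma^{b_{0}}(a_{k-1})}r_{\sigma^{a_{k-1}}(a_{k-2})}\cdots r_{\sigma^{a_{1}}(a_{0})}$. The paper simply writes out this full expansion of both sides at once, whereas you package the same computation as an induction on $k$; your explicit verification that the $(q-1)$-th component of ${\bldL r}_{\sigma^{b_{0}}}$ equals $r_{\sigma^{b_{0}}(q-1)}$ in the base case is a detail the paper leaves implicit.
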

\begin{proof}
It follows from Definition \ref{measure} and 
the assumption $\sigma^{q}={\rm id}$ that 
\begin{eqnarray*}
& &\mu_{{\bld d},{\bld r}}(I_{i+k}(b_{i-1}\cdots b_{0}a_{k-1} 
\cdots a_{0}))
=\mu_{{\bld d},{\bld r}}(I_{i}(b_{i-1}\cdots b_{0}))
r_{\sigma^{b_{0}}(a_{k-1})}r_{\sigma^{a_{k-1}}(a_{k-2})}
\cdots r_{\sigma^{a_{1}}(a_{0})} 
\end{eqnarray*}
and
\begin{eqnarray*}
\mu_{{\bld r}_{\sigma^{b_{0}}},{\bld r}}(I_{k}(a_{k-1}\cdots a_{0}))
&=& r_{\sigma^{b_{0}}(a_{k-1})}r_{\sigma^{a_{k-1}}(a_{k-2})} 
\cdots r_{\sigma^{a_{1}}(a_{0})}.
\end{eqnarray*}
Hence we obtain the assertion. 
\end{proof}

\begin{lemma}\label{phix}
For any $i\in {\bf N}$, $a \in {\bf N}\cup\{0\}$ 
with $0 \leq a \leq q^{i}-1$, and 
$x \in I_{i}(a_{i-1}\cdots a_{0})$, we have 
\begin{equation}\label{pre22-2}
  {\bf 1}_{[0,\phi^{i}(x)]}(\phi^{i}(y))\times 
  {\bf 1}_{I_{i}(a_{i-1}\cdots a_{0})}(y)
 ={\bf 1}_{[0,x]}(y)\times
  {\bf 1}_{I_{i}(a_{i-1}\cdots a_{0})}(y).
\end{equation}
\end{lemma}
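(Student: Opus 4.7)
The plan is to reduce Lemma \ref{phix} to a simple monotonicity statement about $\phi^i$ restricted to a single level-$i$ cylinder. Both sides of (\ref{pre22-2}) carry the common factor ${\bf 1}_{I_i(a_{i-1}\cdots a_0)}(y)$, so when $y$ lies outside $I_i(a_{i-1}\cdots a_0)$ both sides vanish trivially. Hence it suffices to prove, for all $x,y \in I_i(a_{i-1}\cdots a_0)$, the equivalence $\phi^i(y)\le \phi^i(x)$ if and only if $y\le x$.

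The key technical ingredient is the explicit formula
\[
\phi^i(t) = q^i t - a,\qquad t \in I_i(a_{i-1}\cdots a_0),
\]
where $a = \sum_{j=0}^{i-1} a_j q^j$. I would prove this by induction on $i$. The base case $i=1$ is immediate from the definition of $\phi$: on $I_1(a_0)=[a_0/q,(a_0+1)/q)$ (closed on the right only when $a_0=q-1$) one has $\phi(t)=qt-a_0$. For the inductive step, Lemma \ref{phiaction} applied with a length-one prefix shows that $\phi$ carries $I_i(a_{i-1}a_{i-2}\cdots a_0)$ affinely onto $I_{i-1}(a_{i-2}\cdots a_0)$ via $\phi(t) = qt - a_{i-1}$. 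Composing with the inductive hypothesis $\phi^{i-1}(\phi(t)) = q^{i-1}\phi(t) - (a - a_{i-1} q^{i-1})$ then yields $\phi^i(t) = q^i t - a$.

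With this formula in hand, $\phi^i$ is strictly increasing and affine on $I_i(a_{i-1}\cdots a_0)$, so for $x,y$ in this interval, $\phi^i(y)\le\phi^i(x)$ is equivalent to $q^i y - a \le q^i x - a$, i.e., $y\le x$. This gives the required equivalence of the two indicator functions on $I_i(a_{i-1}\cdots a_0)$, and combined with the trivial vanishing off this set, it yields (\ref{pre22-2}). I do not foresee any serious obstacle; the only mild care required is in the boundary case $a = q^i -1$, where $I_i(a_{i-1}\cdots a_0)$ is closed on the right so $\phi^i$ takes values in $[0,1]$ rather than $[0,1)$, but this does not affect the monotonicity argument.
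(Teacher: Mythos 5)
Your proof is correct, and it takes a genuinely different route from the paper's. You reduce the identity to the equivalence $\phi^i(y)\le\phi^i(x)\Leftrightarrow y\le x$ for $x,y$ in the cylinder (legitimate, since $\phi^i(y)\ge 0$ and $y\ge 0$ make both indicator conditions one-sided), and you get this from the explicit formula $\phi^i(t)=q^i t-a$ on $I_i(a_{i-1}\cdots a_0)$ with $a=\sum_j a_j q^j$, proved by induction; your remark on the closed right endpoint of $I_i(q^i-1)$ covers the only delicate case. The paper instead inducts on $i$ directly at the level of the indicator identity: it settles $i=1$ by describing $\phi^{-1}([0,\phi(x)])$ as a union of $q$ short intervals, and in the inductive step it factors ${\bf 1}_{I_{i+1}(a_i\cdots a_0)}$ through Lemma \ref{phicom} and applies the one-step case to the pair $(\phi^i(x),\phi^i(y))$ before invoking the hypothesis. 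Your version isolates the real content --- that $\phi^i$ is an increasing affine bijection of each level-$i$ cylinder onto $[0,1)$ (or $[0,1]$ for the last one) --- in a single reusable formula and is arguably more transparent; the paper's version never writes a formula for $\phi^i$ and stays inside the indicator/cylinder calculus it has already set up, which is the form in which the lemma is consumed later (Lemmas \ref{phiE} and \ref{phiEx}). One small attribution point: Lemma \ref{phiaction} only identifies the \emph{image} of a cylinder as a set, so the ``affinely via $\phi(t)=qt-a_{i-1}$'' step in your induction really follows from your own base case applied on $I_1(a_{i-1})\supset I_i(a_{i-1}\cdots a_0)$, not from that lemma; this is trivial to repair.
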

\begin{proof}
We prove this by induction on $i$. 
When $i=1$, we have for $x \in I_{1}(a_{0})$
$$
\phi(y)\in[0,\phi(x)]\Leftrightarrow
y\in\bigcup_{m=0}^{q-1}\Big[\frac{m}{q},\frac{m}{q}+|x-\frac{a_{0}}{q}|\Big],
$$
and hence 
$$
\phi(y)\in[0,\phi(x)]\ \mbox{and}\ y\in I_{1}(a_{0})
\Leftrightarrow y \in [0,x] \cap I_{1}(a_{0}),
$$
from which we get
\begin{equation}\label{pre22-3}
{\bf 1}_{[0,\phi(x)]}(\phi(y))\times 
{\bf 1}_{I_{1}(a_{0})}(y)
={\bf 1}_{[0,x]}(y)\times 
{\bf 1}_{I_{1}(a_{0})}(y).
\end{equation}

\noindent
By Lemma \ref{phicom}, we have
\begin{equation}\label{pre23}
{\bf 1}_{I_{1}(a_{0})}(\phi^{i}(y))\times
{\bf 1}_{I_{i}(a_{i}\cdots a_{1})}(y)
={\bf 1}_{I_{i+1}(a_{i}\cdots a_{1}a_{0})}(y).
\end{equation}
Therefore, if $x \in I_{i+1}(a_{i}\cdots a_{1}a_{0})$ and 
(\ref{pre22-2}) holds for $i$, by \eqref{pre23}, \eqref{pre22-3}, 
and  Lemma \ref{phicom}, we have
\begin{eqnarray*}
& &{\bf 1}_{[0,\phi^{i+1}(x)]}(\phi^{i+1}(y))\times 
{\bf 1}_{I_{i+1}(a_{i}\cdots a_{1}a_{0})}(y)\\  
& &= {\bf 1}_{[0,\phi(\phi^{i}(x))]}(\phi(\phi^{i}(y)))\times 
{\bf 1}_{I_{1}(a_{0})}(\phi^{i}(y))\times 
{\bf 1}_{I_{i}(a_{i}\cdots a_{1})}(y)\\
& &= {\bf 1}_{[0,\phi^{i}(x)]}(\phi^{i}(y))\times 
{\bf 1}_{I_{1}(a_{0})}(\phi^{i}(y))\times 
{\bf 1}_{I_{i}(a_{i}\cdots a_{1})}(y)\\
& &= {\bf 1}_{[0,x]}(y)\times {\bf 1}_{I_{i}(a_{i}\cdots a_{1})}(y)
\times {\bf 1}_{I_{1}(a_{0})}(\phi^{i}(y))\\
& &= {\bf 1}_{[0,x]}(y)\times 
{\bf 1}_{I_{i+1}(a_{i}\cdots a_{1}a_{0})}(y). 
\end{eqnarray*}
This completes the proof.
\end{proof}

For any bounded $\mathcal{F}$-measurable function $f$, let
\begin{align*}
{\rm E}_{\mu_{{\bld d},{\bld r}}}(f)
& =\int_{I}fd\mu_{{\bld d},{\bld r}},\\
{\rm E}_{\mu_{{\bld d},{\bld r}}}(f;I_{k}(n))
& =\int_{I_{k}(n)}fd\mu_{{\bld d},{\bld r}}.
\end{align*}
Lemmas \ref{phiE} and \ref{phiEx} show that 
a kind of integration by substitution is valid.

\begin{lemma}\label{phiE}
For any $i\in {\bf N}$, $a\in {\bf N}\cup\{0\}$ 
with $0 \leq a \leq q^{i}-1$, 
and a bounded ${\mathcal F}$-measurable function $f$, we have
$$
{\rm E}_{\mu_{{\bld d},{\bld r}}}(f\circ\phi^{i};I_{i}(a_{i-1}\cdots a_{0}))
=\mu_{{\bld d},{\bld r}}(I_{i}(a_{i-1}\cdots a_{0}))
{\rm E}_{\mu_{{\bld r}_{\sigma^{a_{0}}},{\bld r}}}(f).
$$
\end{lemma}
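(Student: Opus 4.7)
The plan is to verify the identity first on indicator functions of basic cylinder sets, then extend by linearity to simple functions, and finally by a standard measure-theoretic approximation to all bounded $\mathcal{F}$-measurable functions.

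For the base case, I would take $f={\bf 1}_{I_{k}(c_{k-1}\cdots c_{0})}$ for arbitrary $k\in{\bf N}$ and a cylinder index $c_{k-1}\cdots c_{0}$. By Lemma \ref{phicom}, $f\circ\phi^{i}$ is the indicator of the union over all length-$i$ prefixes $b_{i-1}\cdots b_{0}$ of the cylinders $I_{i+k}(b_{i-1}\cdots b_{0}c_{k-1}\cdots c_{0})$. Intersecting with $I_{i}(a_{i-1}\cdots a_{0})$ keeps only the single term with $b_{i-1}\cdots b_{0}=a_{i-1}\cdots a_{0}$, so
$$
{\rm E}_{\mu_{{\bld d},{\bld r}}}(f\circ\phi^{i};I_{i}(a_{i-1}\cdots a_{0}))
=\mu_{{\bld d},{\bld r}}(I_{i+k}(a_{i-1}\cdots a_{0}c_{k-1}\cdots c_{0})).
$$
Lemma \ref{phimeasure} factors this as $\mu_{{\bld d},{\bld r}}(I_{i}(a_{i-1}\cdots a_{0}))\cdot\mu_{{\bld r}_{\sigma^{a_{0}}},{\bld r}}(I_{k}(c_{k-1}\cdots c_{0}))$, which coincides with the right-hand side of the claimed identity when $f={\bf 1}_{I_{k}(c_{k-1}\cdots c_{0})}$.

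By linearity the identity then holds for every simple $\mathcal{F}_{k}$-measurable function, for every $k\in{\bf N}$. Since $\bigcup_{k}\mathcal{F}_{k}$ is an algebra generating $\mathcal{F}$, the functional monotone class theorem (applied to the class of bounded $\mathcal{F}$-measurable $f$ for which the identity holds, using that both sides are linear in $f$ and continuous under bounded monotone limits by dominated convergence) upgrades the identity to every bounded $\mathcal{F}$-measurable $f$.

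The only subtle point, and the place where a careless reader might err, is the bookkeeping of the permutation index in the base case: one must apply Lemma \ref{phimeasure} with the shift $\sigma^{a_{0}}$ determined by the least significant digit of the prefix $a_{i-1}\cdots a_{0}$, since that is the digit sitting at the interface between the two blocks in $I_{i+k}(a_{i-1}\cdots a_{0}c_{k-1}\cdots c_{0})$. Beyond that, no difficulty arises and the extension is entirely routine.
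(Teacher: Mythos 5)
Your proof is correct and follows essentially the same route as the paper: verify the identity for $f={\bf 1}_{I_{k}(c_{k-1}\cdots c_{0})}$ via Lemmas \ref{phicom} and \ref{phimeasure}, then extend to general bounded $\mathcal{F}$-measurable $f$ by approximation (the paper simply invokes approximation by step functions where you spell out a monotone class argument). No substantive difference.
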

\noindent{\it Proof.}
Since a bounded ${\mathcal F}$-measurable function can be 
approximated by step functions, it suffices to show the equality for 
$f={\bf 1}_{I_{j}(c_{j-1}\cdots c_{0})}$.
By Lemmas \ref{phicom} and \ref{phimeasure}, we have
\begin{align*}
& {\rm E}_{\mu_{{\bld d},{\bld r}}}({\bf 1}_{I_{j}(c_{j-1}\cdots c_{0})} 
\circ\phi^{i};I_{i}(a_{i-1}\cdots a_{0}))\\
& = \int_I{\bf 1}_{\bigcup_{0\le b_{0},\ldots,b_{i-1}\le q-1}
I_{i+j}(b_{i-1}\cdots b_{0}c_{j-1}\cdots c_{0})}\times
{\bf 1}_{I_{i}(a_{i-1}\cdots a_{0})}d\mu_{{\bld d},{\bld r}}\\
& =\int_I{\bf 1}_{I_{i+j}(a_{i-1}\cdots a_{0}c_{j-1}\cdots c_{0})}
d\mu_{{\bld d},{\bld r}}\\
& =\mu_{{\bld d},{\bld r}}(I_{i+j}
(a_{i-1}\cdots a_{0}c_{i-1}\cdots c_{0}))\\
& =\mu_{{\bld d},{\bld r}}(I_{i}(a_{i-1}\cdots a_{0}))
{\rm E}_{\mu_{{\bld r}_{\sigma^{a_{0}}},{\bld r}}}({\bf 1}_{I_{j}(c_{j-1}\cdots c_{0})}).
\end{align*}

\vspace{-7mm}\qed

\vspace{3mm}

\begin{lemma}\label{phiEx}
For any $i\in {\bf N}$, $a\in {\bf N}\cup\{0\}$ 
with $0 \leq a \leq q^{i}-1$, 
a bounded ${\mathcal F}$-measurable function $f$, 
and $x \in I_{i}(a_{i-1}\cdots a_{0})$, we have
$$
{\rm E}_{\mu_{{\bld d},{\bld r}}}(f\circ\phi^{i};I_{i}(a_{i-1}\cdots a_{0})\cap[0,x])
=\mu_{{\bld d},{\bld r}}(I_{i}(a_{i-1}\cdots a_{0}))
{\rm E}_{\mu_{{\bld r}_{\sigma^{a_{0}}},{\bld r}}}(f;[0,\phi^{i}(x)]).
$$
\end{lemma}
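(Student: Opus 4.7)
The plan is to reduce this to Lemma \ref{phiE} by absorbing the cutoff $[0,\phi^{i}(x)]$ into the integrand as an indicator, and then using Lemma \ref{phix} to transfer the restriction from the image side to the domain side.

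More precisely, I would start by rewriting the left-hand side as
\[
{\rm E}_{\mu_{{\bld d},{\bld r}}}(f\circ\phi^{i};I_{i}(a_{i-1}\cdots a_{0})\cap[0,x])
=\int_{I}(f\circ\phi^{i})(y)\,{\bf 1}_{I_{i}(a_{i-1}\cdots a_{0})}(y)\,{\bf 1}_{[0,x]}(y)\,d\mu_{{\bld d},{\bld r}}(y).
\]
The point is that by Lemma \ref{phix} (with the hypothesis $x\in I_{i}(a_{i-1}\cdots a_{0})$), we may replace the factor ${\bf 1}_{[0,x]}(y)\,{\bf 1}_{I_{i}(a_{i-1}\cdots a_{0})}(y)$ by ${\bf 1}_{[0,\phi^{i}(x)]}(\phi^{i}(y))\,{\bf 1}_{I_{i}(a_{i-1}\cdots a_{0})}(y)$.

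Next I would set $g:=f\cdot{\bf 1}_{[0,\phi^{i}(x)]}$, which is again a bounded $\mathcal F$-measurable function, and recognize that
\[
(f\circ\phi^{i})(y)\,{\bf 1}_{[0,\phi^{i}(x)]}(\phi^{i}(y))=(g\circ\phi^{i})(y).
\]
Thus the left-hand side equals ${\rm E}_{\mu_{{\bld d},{\bld r}}}(g\circ\phi^{i};I_{i}(a_{i-1}\cdots a_{0}))$, which by Lemma \ref{phiE} is
\[
\mu_{{\bld d},{\bld r}}(I_{i}(a_{i-1}\cdots a_{0}))\,{\rm E}_{\mu_{{\bld r}_{\sigma^{a_{0}}},{\bld r}}}(g)
=\mu_{{\bld d},{\bld r}}(I_{i}(a_{i-1}\cdots a_{0}))\,{\rm E}_{\mu_{{\bld r}_{\sigma^{a_{0}}},{\bld r}}}(f;[0,\phi^{i}(x)]),
\]
which is the right-hand side.

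The only real subtlety is checking that the substitution step is valid: Lemma \ref{phix} is an identity between indicator functions, so multiplying both sides by the bounded measurable factor $(f\circ\phi^{i})(y)$ and integrating against $\mu_{{\bld d},{\bld r}}$ is harmless. I do not expect a genuine obstacle here; the lemma is essentially a corollary of Lemmas \ref{phix} and \ref{phiE} once one realizes that pushing the cutoff through $\phi^{i}$ is exactly what Lemma \ref{phix} provides.
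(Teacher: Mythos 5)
Your proposal is correct and is essentially identical to the paper's own proof: both absorb the cutoff via Lemma \ref{phix}, recognize $(f\circ\phi^{i})\times({\bf 1}_{[0,\phi^{i}(x)]}\circ\phi^{i})$ as $(f\times{\bf 1}_{[0,\phi^{i}(x)]})\circ\phi^{i}$, and then apply Lemma \ref{phiE}. No issues.
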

\noindent{\it Proof.}
By Lemmas \ref{phix} and \ref{phiE}, we obtain
\begin{eqnarray*}
& & {\rm E}_{\mu_{{\bld d},{\bld r}}}(f\circ\phi^{i};I_{i}(a_{i-1}\cdots a_{0})\cap[0,x])\\
& &= {\rm E}_{\mu_{{\bld d},{\bld r}}}((f\circ\phi^{i})\times
{\bf 1}_{[0,x]};I_{i}(a_{i-1}\cdots a_{0}))\\
& &={\rm E}_{\mu_{{\bld d},{\bld r}}}((f\circ\phi^{i})\times
({\bf 1}_{[0,\phi^{i}(x)]}\circ\phi^{i});
I_{i}(a_{i-1}\cdots a_{0}))\\
& &=\mu_{{\bld d},{\bld r}}(I_{i}(a_{i-1}\cdots a_{0}))
{\rm E}_{\mu_{{\bld r}_{\sigma^{a_{0}}},{\bld r}}}
(f\times{\bf 1}_{[0,\phi^{i}(x)]}).
\end{eqnarray*}

\vspace{-7mm}\qed

\vspace{3mm}

Next, we discuss the conditional expectation 
${\rm E}_{\mu_{{\bld d},{\bld r}}}(\,\cdot\,|{\mathcal F}_{k})$. 
For a bounded $\mathcal{F}$-measurable function $g$,
${\rm E}_{\mu_{{\bld d},{\bld r}}}(g|{\mathcal F}_{k})$ is 
defined to be the ${\mathcal F}_{k}$-measurable function such that
$$
\int_{G}{\rm E}_{\mu_{{\bld d},{\bld r}}}(g|{\mathcal F}_{k})\,d\mu_{{\bld d},{\bld r}}
=\int_{G}g\,d\mu_{{\bld d},{\bld r}},\quad 
\mbox{for all $G\in{\mathcal F}_{k}$}.
$$
Since ${\mathcal F}_{k}$ is the finite set and 
${\rm E}_{\mu_{{\bld d},{\bld r}}}(g|{\mathcal F}_{k})$ is 
${\mathcal F}_{k}$-measurable, 
${\rm E}_{\mu_{{\bld d},{\bld r}}}(g|{\mathcal F}_{k})$
is a step function with constant values on $I_{k}(n)$'s.
In fact, it is written explicitely as 
\begin{equation}\label{kinji00}
{\rm E}_{\mu_{{\bld d},{\bld r}}}(g|{\mathcal F}_{k})
=\sum_{n=0}^{q^{k}-1}
\frac{{\rm E}_{\mu_{{\bld d},{\bld r}}}(g;I_{k}(n))}{\mu_{{\bld d},{\bld r}}(I_{k}(n))}{\bf 1}_{I_{k}(n)}. 
\end{equation}

\begin{lemma}\label{bubun}
Let $g$ be a bounded $\mathcal{F}$-measurable function. 
If $h$ is a ${\mathcal F}_{k}$-measurable function, and
$g$ satisfies ${\rm E}_{\mu_{{\bld d},{\bld r}}}(g|{\mathcal F}_{k})=0$,
then
$$
\int_{0}^{x}hg\,d\mu_{{\bld d},{\bld r}}
=h(x)\int_{0}^{x}g\,d\mu_{{\bld d},{\bld r}}.
$$
\end{lemma}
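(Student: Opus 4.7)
The plan is to exploit the fact that $h$, being $\mathcal{F}_{k}$-measurable, takes a constant value on each fundamental interval $I_{k}(n)$. By the structure of $\mathcal{F}_{k}$ (see the discussion preceding \eqref{kinji00}), I can write
\[
 h=\sum_{n=0}^{q^{k}-1}c_{n}{\bf 1}_{I_{k}(n)}
\]
for some constants $c_{n}$. For any $x\in I$ there is a unique index $N$ with $x\in I_{k}(N)$, and the interval of integration decomposes as the disjoint union
\[
 [0,x]=\bigsqcup_{n=0}^{N-1}I_{k}(n)\;\sqcup\;\bigl(I_{k}(N)\cap[0,x]\bigr).
\]

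First I would plug the expansion of $h$ into $\int_{0}^{x}hg\,d\mu_{{\bld d},{\bld r}}$ and use the decomposition above to rewrite the integral as
\[
 \int_{0}^{x}hg\,d\mu_{{\bld d},{\bld r}}
 =\sum_{n=0}^{N-1}c_{n}\int_{I_{k}(n)}g\,d\mu_{{\bld d},{\bld r}}
 +c_{N}\int_{I_{k}(N)\cap[0,x]}g\,d\mu_{{\bld d},{\bld r}}.
\]
The key observation is that the hypothesis ${\rm E}_{\mu_{{\bld d},{\bld r}}}(g|\mathcal{F}_{k})=0$ together with the explicit formula \eqref{kinji00} forces $\int_{I_{k}(n)}g\,d\mu_{{\bld d},{\bld r}}=0$ for every $n$, since otherwise the right-hand side of \eqref{kinji00} would be a nonzero step function. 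Consequently every full-interval contribution in the sum above vanishes, leaving only
\[
 \int_{0}^{x}hg\,d\mu_{{\bld d},{\bld r}}
 =c_{N}\int_{I_{k}(N)\cap[0,x]}g\,d\mu_{{\bld d},{\bld r}}.
\]

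The same cancellation, applied to $\int_{0}^{x}g\,d\mu_{{\bld d},{\bld r}}$ (the case $h\equiv 1$ of the same decomposition), reduces that integral to the identical partial piece $\int_{I_{k}(N)\cap[0,x]}g\,d\mu_{{\bld d},{\bld r}}$. Since $x\in I_{k}(N)$ gives $h(x)=c_{N}$, the two sides coincide.

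I do not expect any serious obstacle; the argument is essentially bookkeeping, relying only on the constancy of $h$ on atoms of $\mathcal{F}_{k}$ and on the vanishing of the $\mu_{{\bld d},{\bld r}}$-mean of $g$ on each atom that follows from \eqref{kinji00}. The only minor point worth checking is the harmless endpoint convention distinguishing $I_{k}(q^{k}-1)$ from the other $I_{k}(n)$, which plays no role because both integrals $\int_{0}^{x}$ treat $[0,x]$ in exactly the same way.
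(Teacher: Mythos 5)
Your proposal is correct and follows essentially the same route as the paper: both arguments use \eqref{kinji00} to convert the hypothesis ${\rm E}_{\mu_{{\bld d},{\bld r}}}(g|\mathcal{F}_{k})=0$ into the vanishing of $\int_{I_{k}(n)}g\,d\mu_{{\bld d},{\bld r}}$ on every atom, then exploit the constancy of $h$ on atoms and the decomposition of $[0,x]$ into full atoms plus the partial piece containing $x$. No gaps.
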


\begin{proof}
By \eqref{kinji00}, 
${\rm E}_{\mu_{{\bld d},{\bld r}}}(g|{\mathcal F}_{k})=0$
is equivalent to ${\rm E}_{\mu_{{\bld d},{\bld r}}}(g;I_{k}(n))=0$
for every $n$. 
Since $h$ is ${\mathcal F}_{k}$-measurable,
it takes a constant value $C_{n}$ on $I_{k}(n)$. Hence
\begin{align*}
& {\rm E}_{\mu_{{\bld d},{\bld r}}}(hg;I_{k}(n))
=C_{n}{\rm E}_{\mu_{{\bld d},{\bld r}}}(g;I_{k}(n))=0.
\end{align*}
Thus we obtain for $x \in I_{k}(m)$ 
\begin{align*}
\int_{0}^{x}hg\,d_{\mu_{{\bld d},{\bld r}}}
& = {\rm E}_{\mu_{{\bld d},{\bld r}}}(hg;I_{k}(m)\cap[0,x])\\
& = h(x){\rm E}_{\mu_{{\bld d},{\bld r}}}(g;I_{k}(m)\cap[0,x])
= h(x)\int_{0}^{x}g\,d\mu_{{\bld d},{\bld r}}.
\end{align*}
Since the equality is independent of $m$, it is valid for $x\in I$.
\end{proof}

\section{The Radon-Nikodym derivative on the finite set}

Let ${\bldL e}=(e_{0},\ldots ,e_{q-2})$ and 
${\bldL s}=(s_{0},\ldots ,s_{q-2})$ 
be vectors whose components satisfy 
the same conditions as those of 
${\bldL d}$ in Definition \ref{measure},  
and set $e_{q-1}=1-\sum_{j=0}^{q-2}e_{j}$ and  $s_{q-1}=1-\sum_{j=0}^{q-2}s_{j}$.

\begin{definition}\label{Zdef}
The function 
$Z\Big[
  \begin{array}{ll}
    {\bldL e} & {\bldL s} \\
    {\bldL d} & {\bldL r} 
  \end{array}
  ;k\Big]:I\to{\bf R}$ 
is defined by
\begin{equation*}
  Z\Big[
    \begin{array}{ll}
      {\bldL e} & {\bldL s} \\
      {\bldL d} & {\bldL r} 
    \end{array}
  ;k\Big]
  =\sum_{n=0}^{q^{k}-1}
     \frac{\mu_{{\bld e},{\bld s}}(I_{k}(n))}
          {\mu_{{\bld d},{\bld r}}(I_{k}(n))}
     {\bf 1}_{I_{k}(n)},\quad k \in {\bf N}\cup\{0\}. 
\end{equation*}
\end{definition}

\begin{remark}
$Z\Big[
  \begin{array}{ll}
    {\bldL e} & {\bldL s} \\
    {\bldL d} & {\bldL r} 
  \end{array}
  ;k\Big]$ 
is the so-called Radon-Nikodym derivative 
$d\mu_{{\bld e},{\bld s}}/d\mu_{{\bld d},{\bld r}}$ on $\mathcal{F}_{k}$.
\end{remark}

We identify $I_{k}(n)$ with $I_{k}(n_{k-1}\cdots n_{0})$
as in the previous section.  

\begin{definition}\label{Wdef}
The function $W\Big[
\begin{array}{l}
 {\bldL s} \\
 {\bldL r} 
\end{array}\Big]:I\to{\bf R}$ is defined by
$$
W\Big[
\begin{array}{l}
 {\bldL s} \\
 {\bldL r} 
\end{array}\Big]
=\sum_{0\le b_{0}, b_{1}\le q-1}
\frac{s_{\sigma^{b_{1}}(b_{0})}}{r_{\sigma^{b_{1}}(b_{0})}}
{\bf 1}_{I_{2}(b_{1}b_{0})}.
$$
\end{definition}

The following propositions have been proved in \cite{KOSS}.

\begin{proposition}\label{weak}
We have 
\[
  L_{{\bld e},{\bld s}}(x)
    =\lim_{k\to\infty}\int_{0}^{x}
       Z\Big[
         \begin{array}{ll}
          {\bldL e} & {\bldL s} \\
          {\bldL d} & {\bldL r} 
         \end{array}
       ;k\Big]d\mu_{{\bld d},{\bld r}}, 
\]
where the convergence is uniform for 
${\bldL e}=(e_{0},\ldots,e_{q-2})$
and ${\bldL s}=(s_{0},\ldots,s_{q-2})$. 
\end{proposition}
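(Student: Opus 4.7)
The plan is to use the fact that, by Definition \ref{Zdef}, the integrand (call it $Z_k$) is a step function taking the constant value $\mu_{{\bld e},{\bld s}}(I_{k}(n))/\mu_{{\bld d},{\bld r}}(I_{k}(n))$ on each atom $I_{k}(n)$ of $\mathcal{F}_{k}$. Consequently, integrating $Z_k$ against $d\mu_{{\bld d},{\bld r}}$ over any $I_k(n)$ returns precisely $\mu_{{\bld e},{\bld s}}(I_{k}(n))$; this is the finite-level Radon--Nikodym identity indicated in the remark.

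For $x\in I$ and each $k\ge 1$, I would fix the integer $m=m(k,x)$ with $x\in I_k(m)$ and split the integral at $m/q^k$. The portion on $[0,m/q^k)=\bigsqcup_{n=0}^{m-1}I_k(n)$ evaluates to $\sum_{n=0}^{m-1}\mu_{{\bld e},{\bld s}}(I_k(n))=\mu_{{\bld e},{\bld s}}([0,m/q^k))$, while the portion on $[m/q^k,x]\subset I_k(m)$ equals
\[
\frac{\mu_{{\bld e},{\bld s}}(I_{k}(m))}{\mu_{{\bld d},{\bld r}}(I_{k}(m))}\,\mu_{{\bld d},{\bld r}}\bigl([m/q^k,x]\bigr) \;\le\; \mu_{{\bld e},{\bld s}}(I_k(m)).
\]
Comparing with $L_{{\bld e},{\bld s}}(x)=\mu_{{\bld e},{\bld s}}([0,m/q^k))+\mu_{{\bld e},{\bld s}}([m/q^k,x])$ and noting $\mu_{{\bld e},{\bld s}}([m/q^k,x])\le\mu_{{\bld e},{\bld s}}(I_k(m))$, the triangle inequality produces the pointwise estimate
\[
\Big|\int_0^x Z_k\,d\mu_{{\bld d},{\bld r}}-L_{{\bld e},{\bld s}}(x)\Big| \;\le\; 2\,\mu_{{\bld e},{\bld s}}(I_k(m)).
\]

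The remaining task is to show $\sup_{0\le m\le q^k-1}\mu_{{\bld e},{\bld s}}(I_k(m))\to 0$. Unwinding Definition \ref{measure} writes $\mu_{{\bld e},{\bld s}}(I_k(m))$ as a product of one $e$-factor and $k-1$ factors of the form $s_{\sigma^{\cdot}(\cdot)}$, each strictly smaller than $1$; setting $\theta:=\max_{0\le j\le q-1}\max(e_j,s_j)<1$ gives the bound $\theta^k\to 0$, which is manifestly uniform in $x$. The only subtlety, and the single point where I anticipate any care is needed, is the uniformity in $({\bldL e},{\bldL s})$: making $\theta$ bounded away from $1$ uniformly in those parameters requires restricting to a compact subset of the open simplex on which they vary. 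That restriction is the natural reading of the uniformity statement in the proposition.
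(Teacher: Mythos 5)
The paper itself gives no proof of this proposition --- it is quoted from \cite{KOSS} --- so there is nothing in-text to compare against; judging your argument on its own terms, it is correct and complete. The computation $\int_{I_k(n)}Z_k\,d\mu_{{\bld d},{\bld r}}=\mu_{{\bld e},{\bld s}}(I_k(n))$, the split at $m/q^k$, and the resulting bound $|\int_0^xZ_k\,d\mu_{{\bld d},{\bld r}}-L_{{\bld e},{\bld s}}(x)|\le 2\mu_{{\bld e},{\bld s}}(I_k(m))\le 2\theta^k$ are all right, and this is the natural (essentially martingale-convergence) argument one would expect \cite{KOSS} to use. Your caveat about uniformity in $({\bldL e},{\bldL s})$ is not mere caution: as literally stated, uniformity over the whole open parameter simplex is false. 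For instance, with $q=2$, $\sigma=\mathrm{id}$, ${\bldL d}={\bldL r}=(1/2)$ and $e_0=s_0=1-\epsilon$, taking $x=2^{-k-1}$ gives $\int_0^xZ_k\,d\mu_{{\bld d},{\bld r}}=(1-\epsilon)^k/2$ while $L_{{\bld e},{\bld s}}(x)=(1-\epsilon)^{k+1}$, so for each fixed $k$ the discrepancy approaches $1/2$ as $\epsilon\to0$. Hence the statement must be read as uniform convergence on compact subsets of the admissible parameter region, exactly as you propose; that local uniformity is all that is used later (it suffices to justify the term-by-term differentiation at an interior point ${\bldL s}={\bldL r}$ in Lemma \ref{Hsrq}).
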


\begin{proposition}\label{Wproduct}
For $k\ge 1$, we have
$$
Z\Big[
\begin{array}{ll}
{\bldL e} & {\bldL s} \\
{\bldL d} & {\bldL r} 
\end{array}
;k+1\Big]
=
\Big(
\prod_{i=0}^{k-1}
W\Big[
\begin{array}{l}
 {\bldL s} \\
 {\bldL r} 
\end{array}\Big]
\circ \phi^{i}
\Big)
Z\Big[
\begin{array}{ll}
{\bldL e} & {\bldL s} \\
{\bldL d} & {\bldL r} 
\end{array}
;1\Big].
$$
\end{proposition}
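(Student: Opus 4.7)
The plan is to prove the proposition by induction on $k$, reducing it to the one-step identity
\[
Z\Big[\begin{array}{ll} {\bldL e} & {\bldL s} \\ {\bldL d} & {\bldL r}\end{array}; k+2\Big]
= \Big(W\Big[\begin{array}{l} {\bldL s} \\ {\bldL r}\end{array}\Big]\circ\phi^{k}\Big)\,
Z\Big[\begin{array}{ll} {\bldL e} & {\bldL s} \\ {\bldL d} & {\bldL r}\end{array}; k+1\Big],
\]
valid for every $k\ge 0$. Iterating this identity unwinds the telescoping product on the right-hand side of Proposition \ref{Wproduct}; the base case $k=1$ of the proposition is exactly the one-step identity at $k=0$.

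To establish the one-step identity I would fix a cylinder $I_{k+2}(n_{k+1}\cdots n_{0})$ in the $\mathcal{F}_{k+2}$-partition and compare both sides pointwise on it. By Definition \ref{Zdef} the left-hand side takes the value $\mu_{{\bld e},{\bld s}}(I_{k+2}(n_{k+1}\cdots n_{0}))/\mu_{{\bld d},{\bld r}}(I_{k+2}(n_{k+1}\cdots n_{0}))$. For the right-hand side, the inclusion $I_{k+2}(n_{k+1}\cdots n_{0})\subset I_{k+1}(n_{k+1}\cdots n_{1})$ noted just before Lemma \ref{phiaction} shows that $Z[\,\cdots\,;k+1]$ takes the value $\mu_{{\bld e},{\bld s}}(I_{k+1}(n_{k+1}\cdots n_{1}))/\mu_{{\bld d},{\bld r}}(I_{k+1}(n_{k+1}\cdots n_{1}))$ there. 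For the factor $W\circ\phi^{k}$, Lemma \ref{phicom} applied to each ${\bf 1}_{I_{2}(b_{1}b_{0})}$ appearing in the definition of $W$ shows that on $I_{k+2}(n_{k+1}\cdots n_{0})$ only the summand with $(b_{1},b_{0})=(n_{1},n_{0})$ survives, so the value is $s_{\sigma^{n_{1}}(n_{0})}/r_{\sigma^{n_{1}}(n_{0})}$.

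The two values are then matched by a single application of Definition \ref{measure}(iii). Writing $n_{k+1}\cdots n_{0}$ as $qj+l$ with $l=n_{0}$ and $j=n_{k+1}\cdots n_{1}$, and using $\sigma^{q}={\rm id}$ to reduce $\sigma^{j}$ to $\sigma^{n_{1}}$, the recursion yields
\[
\mu_{{\bld e},{\bld s}}(I_{k+2}(n_{k+1}\cdots n_{0}))=\mu_{{\bld e},{\bld s}}(I_{k+1}(n_{k+1}\cdots n_{1}))\,s_{\sigma^{n_{1}}(n_{0})},
\]
and the analogous equality holds with $({\bld d},{\bld r})$ in place of $({\bld e},{\bld s})$. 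Taking the quotient recovers exactly the value computed for the right-hand side, closing the induction. No conceptual obstacle is anticipated; the main subtlety is purely notational, namely keeping track of which digit of the cylinder index controls each $\sigma$-power, and matching the ``last digit'' $n_{1}$ of the truncated index arising from the recursion with the digit extracted by the shift $\phi^{k}$ via Lemma \ref{phicom}.
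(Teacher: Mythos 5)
Your proof is correct: the cylinder-by-cylinder evaluation, the identification of the surviving summand of $W\circ\phi^{k}$ via Lemma \ref{phicom}, and the reduction $\sigma^{j}=\sigma^{n_{1}}$ from $\sigma^{q}={\rm id}$ all check out, and the induction closes. The paper itself omits the proof of Proposition \ref{Wproduct} (deferring to \cite{KOSS}), but your argument is the natural one-step telescoping verification that the cited proof is based on, so there is nothing further to reconcile.
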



\section{Higher order derivatives of distribution functions}\label{HOD}

Firstly, we study a relation between
$L_{\bld s}(x)(=L_{{\bld s},{\bld s}}(x))$ and 
$L_{{\bld q},{\bld s}}(x)$.

\begin{lemma}\label{basechange}
We have
\begin{align*}
L_{\bld s}(x)=\Big(q\sum_{n=0}^{q-1}s_{n}{\bf 1}_{I_{1}(n)}(x)\Big)
(L_{{\bld q},{\bld s}}(x)-x)
+q\int_{0}^{x}\sum_{n=0}^{q-1}s_{n}{\bf 1}_{I_{1}(n)}d\mu, 
\end{align*}
where $\mu$ is the Lebesgue measure on $I$. 
\end{lemma}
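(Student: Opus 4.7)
The plan is to apply Proposition~\ref{weak} with the choice $({\bldL e},{\bldL s},{\bldL d},{\bldL r})=({\bldL s},{\bldL s},{\bldL q},{\bldL s})$ and to exploit the fact that the associated $W$-factor is identically $1$. By Definition~\ref{Wdef},
\[
W\Big[\begin{array}{l} {\bldL s}\\ {\bldL s}\end{array}\Big]
=\sum_{0\le b_{0},b_{1}\le q-1}\frac{s_{\sigma^{b_{1}}(b_{0})}}{s_{\sigma^{b_{1}}(b_{0})}}{\bf 1}_{I_{2}(b_{1}b_{0})}\equiv 1,
\]
so Proposition~\ref{Wproduct} shows that
$Z\Big[\begin{array}{ll}{\bldL s}&{\bldL s}\\ {\bldL q}&{\bldL s}\end{array};k\Big]$
is independent of $k\ge 1$ and equals
\[
\sum_{n=0}^{q-1}\frac{\mu_{{\bld s},{\bld s}}(I_{1}(n))}{\mu_{{\bld q},{\bld s}}(I_{1}(n))}{\bf 1}_{I_{1}(n)}
=q\sum_{n=0}^{q-1}s_{n}{\bf 1}_{I_{1}(n)}.
\]
Consequently the limit in Proposition~\ref{weak} collapses to
$L_{\bld s}(x)=\int_{0}^{x}q\sum_{n=0}^{q-1}s_{n}{\bf 1}_{I_{1}(n)}\,d\mu_{{\bld q},{\bld s}}$.

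Next I would evaluate this integral directly on each $I_{1}(m)$. For $x\in I_{1}(m)$, the integrand is the piecewise constant value $qs_{n}$ on $I_{1}(n)$, so splitting $[0,x]=\bigsqcup_{n<m}I_{1}(n)\sqcup(I_{1}(m)\cap[0,x])$ and using $\mu_{{\bld q},{\bld s}}(I_{1}(n))=1/q$ together with $L_{{\bld q},{\bld s}}(m/q)=m/q$ gives
\[
\int_{0}^{x}q\sum_{n=0}^{q-1}s_{n}{\bf 1}_{I_{1}(n)}\,d\mu_{{\bld q},{\bld s}}
=\sum_{n=0}^{m-1}s_{n}+qs_{m}\bigl(L_{{\bld q},{\bld s}}(x)-\tfrac{m}{q}\bigr).
\]
An identical computation with the Lebesgue measure $\mu$ in place of $\mu_{{\bld q},{\bld s}}$ yields
$\int_{0}^{x}q\sum_{n=0}^{q-1}s_{n}{\bf 1}_{I_{1}(n)}\,d\mu=\sum_{n=0}^{m-1}s_{n}+qs_{m}(x-m/q)$.

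Subtracting the two identities, the common piece $\sum_{n<m}s_{n}-ms_{m}$ cancels, and we obtain
\[
L_{\bld s}(x)-q\int_{0}^{x}\sum_{n=0}^{q-1}s_{n}{\bf 1}_{I_{1}(n)}\,d\mu
=qs_{m}\bigl(L_{{\bld q},{\bld s}}(x)-x\bigr)
=\Big(q\sum_{n=0}^{q-1}s_{n}{\bf 1}_{I_{1}(n)}(x)\Big)\bigl(L_{{\bld q},{\bld s}}(x)-x\bigr),
\]
which rearranges to the claimed formula. The main obstacle is simply recognizing the collapsing identity $W[{\bldL s};{\bldL s}]\equiv 1$, which trivializes Proposition~\ref{Wproduct} and turns the limit in Proposition~\ref{weak} into a single elementary integral; once this is noticed, the remainder of the argument is routine bookkeeping on each interval $I_{1}(m)$.
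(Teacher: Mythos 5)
Your proof is correct, but it reaches the key identity by a different route than the paper. The paper's proof works directly from Definition \ref{measure}: for $x\in I_{1}(m)$ it writes $L_{\bld s}(x)=L_{\bld s}(m/q)+\mu_{{\bld s},{\bld s}}((m/q,x])$ and uses the local proportionality $\mu_{{\bld s},{\bld s}}((m/q,x])=\frac{s_{m}}{1/q}\,\mu_{{\bld q},{\bld s}}((m/q,x])$, which is immediate because both measures assign to any $I_{k}(n)\subset I_{1}(m)$ the same product of $s$-factors beyond the first level. You instead derive the global identity $L_{\bld s}(x)=\int_{0}^{x}q\sum_{n}s_{n}{\bf 1}_{I_{1}(n)}\,d\mu_{{\bld q},{\bld s}}$ from Propositions \ref{weak} and \ref{Wproduct}, via the observation that $W\Big[\begin{array}{l}{\bldL s}\\ {\bldL s}\end{array}\Big]\equiv 1$ collapses the product and makes $Z\Big[\begin{array}{ll}{\bldL s} & {\bldL s}\\ {\bldL q} & {\bldL s}\end{array};k\Big]$ independent of $k\ge 1$; this is a valid and rather elegant use of the Radon--Nikodym machinery already set up in Section 3, and it packages the proportionality as a single change-of-base-measure statement. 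The subsequent interval-by-interval bookkeeping (splitting $[0,x]$ at the points $n/q$, using $\mu_{{\bld q},{\bld s}}(I_{1}(n))=1/q$ and $L_{{\bld q},{\bld s}}(m/q)=m/q$, and comparing with the Lebesgue integral) is essentially identical to the paper's computation \eqref{6.2eq1}--\eqref{6.2eq2}. The trade-off is that your argument leans on the heavier Propositions \ref{weak} and \ref{Wproduct}, whereas the paper's is elementary and self-contained; on the other hand your route makes transparent \emph{why} the prefactor is exactly the level-one Radon--Nikodym derivative $q\sum_{n}s_{n}{\bf 1}_{I_{1}(n)}$.
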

\begin{proof}
Let $x\in  I_{1}(m)\ (0 \leq m \leq q-1)$. Then it follows that 
\begin{eqnarray}\label{6.2eq1}
L_{\bld s}(x)&=&
L_{\bld s}\Big(\frac{m}{q}\Big)+\mu_{{\bld s},{\bld s}}
\Big(\Big(\frac{m}{q},x\Big]\Big)\nonumber\\
&=&\sum_{n=0}^{m-1}s_{n}+\frac{s_{m}}{1/q}\mu_{{\bld q},{\bld s}}\Big(\Big(\frac{m}{q},x\Big]\Big)\nonumber\\
&=&q s_{m}(L_{{\bld q},{\bld s}}(x)-x)+\sum_{n=0}^{m-1}s_{n}+q s_{m}\Big(x-\frac{m}{q}\Big).
\end{eqnarray}
Noting that, for $x\in  I_{1}(m)$,
\[
\int_{0}^{x}{\bf 1}_{I_{1}(n)}d\mu=
\begin{cases}
0, &   n>m,\\          
x-\frac{n}{q}, & n=m, \\   
\frac{1}{q}, & n<m,
\end{cases}
\]
we have
\begin{equation}\label{6.2eq2}
\sum_{n=0}^{m-1}s_{n}+qs_{m}\Big(x-\frac{m}{q}\Big)
=\sum_{n=0}^{q-1}qs_{n}\int_{0}^{x}{\bf 1}_{I_{1}(n)}d\mu.
\end{equation}
Substituting \eqref{6.2eq2} into \eqref{6.2eq1} 
and replacing the range of the variable $x$ to $I$, 
we obtain the assertion.
\end{proof}

By Lemma \ref{basechange},
we have easily the following relation between 
the higher order derivative of $L_{\bld s}(x)$ and
that of $L_{{\bld q},{\bld s}}(x)$.

\begin{lemma}\label{HDchange2} 
{\rm (i)} If ${\bldL u}={\bldL e}_{l}$, then
\begin{align*}
\frac{1}{q}\frac{\partial}{\partial s_{l}}L_{\bld s}(x)
=&\ ({\bf 1}_{I_{1}(l)}(x)-{\bf 1}_{I_{1}(q-1)}(x))
(L_{{\bld q},{\bld s}}(x)-x)\\
& +\Big(\sum_{n=0}^{q-1}s_{n}{\bf 1}_{I_{1}(n)}(x)\Big)
\frac{\partial}{\partial s_{l}}L_{{\bld q},{\bld s}}(x)
+\int_{0}^{x}({\bf 1}_{I_{1}(l)}-{\bf 1}_{I_{1}(q-1)})d\mu.
\end{align*}
{\rm (ii)} If $|{\bldL u}|\ge 2$, then
\begin{align*}
\frac{1}{q}\frac{\partial^{u_{0}+\cdots +u_{q-2}}}
{\partial s_{0}^{u_{0}}\cdots \partial s_{q-2}^{u_{q-2}}}
L_{\bld s}(x)
= &\ \sum_{\stackrel{\scriptstyle{j=0}}{u_{j}>0}}^{q-2}
u_{j}({\bf 1}_{I_{1}(j)}(x)-{\bf 1}_{I_{1}(q-1)}(x))\\
& \times\frac{\partial^{u_{0}+\cdots +u_{j-1}+(u_{j}-1)+u_{j+1}+\cdots+u_{q-2}}}
{\partial s_{0}^{u_{0}}\cdots\partial s_{j-1}^{u_{j-1}}
\partial s_{j}^{u_{j}-1}\partial s_{j+1}^{u_{j+1}}
\cdots \partial s_{q-2}^{u_{q-2}}}
L_{{\bld q},{\bld s}}(x)\\
& +\Big(\sum_{n=0}^{q-1}s_{n}{\bf 1}_{I_{1}(n)}(x)\Big)
\frac{\partial^{u_{0}+\cdots +u_{q-2}}}
{\partial s_{0}^{u_{0}}\cdots \partial s_{q-2}^{u_{q-2}}}
L_{{\bld q},{\bld s}}(x).
\end{align*}
\end{lemma}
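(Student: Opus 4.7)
The plan is to differentiate the identity from Lemma \ref{basechange} term by term with respect to the components of ${\bldL s}$. Define
\[
 f(x,{\bldL s})=q\sum_{n=0}^{q-1}s_{n}{\bf 1}_{I_{1}(n)}(x),\qquad
 g(x,{\bldL s})=L_{{\bld q},{\bld s}}(x)-x,\qquad
 h(x,{\bldL s})=q\int_{0}^{x}\sum_{n=0}^{q-1}s_{n}{\bf 1}_{I_{1}(n)}\,d\mu,
\]
so that Lemma \ref{basechange} reads $L_{\bld s}(x)=f(x,{\bldL s})\,g(x,{\bldL s})+h(x,{\bldL s})$. Remember that $s_{q-1}=1-\sum_{j=0}^{q-2}s_{j}$, so $\partial s_{q-1}/\partial s_{l}=-1$ for $0\le l\le q-2$. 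This is the only subtle bookkeeping item and it is what manufactures the difference ${\bf 1}_{I_{1}(l)}-{\bf 1}_{I_{1}(q-1)}$ out of the sum over $n$.

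For part (i), I would simply apply $\partial/\partial s_{l}$ to Lemma \ref{basechange}. The product rule gives
\[
 \partial_{s_{l}}L_{\bld s}
 =(\partial_{s_{l}}f)\,g+f\,(\partial_{s_{l}}g)+\partial_{s_{l}}h,
\]
and direct computation yields $\partial_{s_{l}}f=q({\bf 1}_{I_{1}(l)}-{\bf 1}_{I_{1}(q-1)})$, $\partial_{s_{l}}g=\partial_{s_{l}}L_{{\bld q},{\bld s}}$, and $\partial_{s_{l}}h=q\int_{0}^{x}({\bf 1}_{I_{1}(l)}-{\bf 1}_{I_{1}(q-1)})\,d\mu$. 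Dividing by $q$ produces exactly the stated formula. Differentiation under the integral and differentiation of $L_{{\bld q},{\bld s}}$ in ${\bldL s}$ are justified by the analyticity of $L_{{\bld q},{\bld s}}$ in ${\bldL s}$, i.e.\ \cite[Theorem 2]{KOSS}.

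For part (ii), the key observation is that $f$ is affine in ${\bldL s}$, so $\partial^{\bld v}f\equiv 0$ whenever $|{\bldL v}|\ge 2$; similarly $h$ is affine in ${\bldL s}$, so $\partial^{\bld u}h=0$ for $|{\bldL u}|\ge 2$. The multivariate Leibniz rule
\[
 \partial^{\bld u}(fg)=\sum_{{\bld v}\le {\bld u}}\binom{{\bld u}}{{\bld v}}(\partial^{{\bld v}}f)(\partial^{{\bld u}-{\bld v}}g)
\]
therefore collapses to just the ${\bldL v}={\bldL 0}$ term and the $q-1$ terms ${\bldL v}={\bldL e}_{j}$ with $u_{j}>0$, producing
\[
 \partial^{\bld u}L_{\bld s}
 =f\,\partial^{\bld u}g+\sum_{\substack{j=0\\ u_{j}>0}}^{q-2}u_{j}\,(\partial_{s_{j}}f)\,\partial^{{\bld u}-{\bld e}_{j}}g.
\]
Since $|{\bldL u}|\ge 2$ forces $|{\bldL u}-{\bldL e}_{j}|\ge 1$, the $-x$ in $g$ contributes nothing to $\partial^{{\bld u}-{\bld e}_{j}}g$; substituting the explicit formulas for $f$ and $\partial_{s_{j}}f$ and dividing by $q$ gives the assertion.

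The proof is essentially a routine application of Leibniz's rule, so there is no real obstacle; the only point to watch is the chain rule for the constrained variable $s_{q-1}$, which is what cleanly supplies the subtracted term ${\bf 1}_{I_{1}(q-1)}$ in both formulas. Everything else is a mechanical verification.
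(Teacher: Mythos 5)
Your proposal is correct and is exactly the argument the paper intends: the paper derives Lemma \ref{HDchange2} directly from Lemma \ref{basechange} (stating only that it follows ``easily''), and your computation -- the product/Leibniz rule applied to the decomposition $L_{\bld s}=fg+h$, with the chain-rule contribution $\partial s_{q-1}/\partial s_{l}=-1$ producing the subtracted ${\bf 1}_{I_{1}(q-1)}$ terms and the affineness of $f$ and $h$ truncating the Leibniz sum in part (ii) -- is precisely the omitted routine verification.
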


Next, we study the higher order derivative of $L_{{\bld q},{\bld s}}(x)$.
Let $\psi_{\bld u}:\{1,2,\ldots,|{\bldL u}|\}\rightarrow
\{0,1,\ldots,q-2\}$ be a mapping such that 
$\#\{m;\psi_{{\bld u}}(m)=j\}=u_{j}$, 
which is the same one as that of \cite{O-S-S}. 
For example, if $q=4$, ${\bldL u}=(u_{0},u_{1},u_{2})=(1,2,0)$,
then $\psi_{\bld u}:\{1,2,3\}\rightarrow\{0,1,2\}$
is a mapping satisfying
$\#\{m;\psi_{{\bld u}}(m)=0\}=1$,
$\#\{m;\psi_{{\bld u}}(m)=1\}=2$,
and $\#\{m;\psi_{{\bld u}}(m)=2\}=0$. 
In fact, $\psi_{\bld u}$ is one of three mappings
$$
\begin{cases}
\psi_{\bld u}(1)=0, \\
\psi_{\bld u}(2)=1, \\
\psi_{\bld u}(3)=1, 
\end{cases}\quad
\begin{cases}
\psi_{\bld u}(1)=1, \\
\psi_{\bld u}(2)=0, \\
\psi_{\bld u}(3)=1, 
\end{cases}\quad
\begin{cases}
\psi_{\bld u}(1)=1, \\
\psi_{\bld u}(2)=1, \\
\psi_{\bld u}(3)=0.
\end{cases}
$$

\begin{lemma}\label{Hsrq}
We have 
\begin{align*}
& \frac{\partial^{u_{0}+\cdots+u_{q-2}}}
{\partial s_{0}^{u_{0}}\cdots 
\partial s_{q-2}^{u_{q-2}}}L_{{\bld q},{\bld s}}(x)
\Big|_{{\bld s}={\bld r}}\\ 
& ={\bldL u}!\lim_{k\to\infty}\sum_{0\le i_{1}<\cdots <i_{|\bld u|}\le k-2}\sum_{\psi_{\bld u}}
\int_{0}^{x}\prod_{m=1}^{|\bld u|}
\Big(\frac{\Phi_{\psi_{\bld u}(m)}}{r_{\psi_{\bld u}(m)}}
-\frac{\Phi_{q-1}}{r_{q-1}}\Big)\circ\phi^{i_{m}}
d\mu_{{\bld q},{\bld r}},
\end{align*}
where the sum $\sum_{\psi_{\bld u}}$ is taken over all $\psi_{\bld u}$'s.
\end{lemma}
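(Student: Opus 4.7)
The plan is to apply Proposition \ref{weak} with ${\bldL e}={\bldL d}={\bldL q}$, substitute the product expression from Proposition \ref{Wproduct} for the Radon--Nikodym factor, and read off the partial derivative at ${\bldL s}={\bldL r}$ from a Taylor expansion of that product in ${\bldL s}$. Since $\mu_{{\bld q},{\bld s}}(I_{1}(n))=1/q=\mu_{{\bld q},{\bld r}}(I_{1}(n))$ for every $n$, the base factor $Z[\,\cdots;1]$ is identically $1$, and Proposition \ref{Wproduct} (with $k$ in place of $k+1$) gives
\[
Z\Big[\begin{array}{ll}{\bldL q} & {\bldL s}\\ {\bldL q} & {\bldL r}\end{array};k\Big]=\prod_{i=0}^{k-2}W\Big[\begin{array}{l}{\bldL s}\\ {\bldL r}\end{array}\Big]\circ\phi^{i}\qquad(k\ge 2).
\]

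A short computation from Definition \ref{Wdef}, together with $s_{q-1}=1-\sum_{l=0}^{q-2}s_{l}$ and the identification $I_{2}(qj+\sigma^{-j}(l))=I_{2}(b_{1}b_{0})$ with $(b_{1},b_{0})=(j,\sigma^{-j}(l))$ (so that $\sigma^{b_{1}}(b_{0})=l$), shows that $W$ is affine in ${\bldL s}$, equals $1$ at ${\bldL s}={\bldL r}$, and satisfies $\partial W/\partial s_{l}=\Phi_{l}/r_{l}-\Phi_{q-1}/r_{q-1}$. Consequently each factor of the product admits the exact Taylor expansion around ${\bldL s}={\bldL r}$
\[
W\Big[\begin{array}{l}{\bldL s}\\ {\bldL r}\end{array}\Big]\circ\phi^{i}=1+\sum_{l=0}^{q-2}\Big(\frac{\Phi_{l}}{r_{l}}-\frac{\Phi_{q-1}}{r_{q-1}}\Big)\circ\phi^{i}\cdot(s_{l}-r_{l}).
\]
Multiplying out the $k-1$ factors and reading off the coefficient of $\prod_{l=0}^{q-2}(s_{l}-r_{l})^{u_{l}}$ reduces to selecting the $|{\bldL u}|$ indices that contribute a linear term---an increasing tuple $0\le i_{1}<\cdots<i_{|{\bld u}|}\le k-2$---and labeling each chosen factor with a variable $\psi_{\bld u}(m)\in\{0,\ldots,q-2\}$ whose fiber sizes are $u_{0},\ldots,u_{q-2}$. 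This produces
\[
\sum_{0\le i_{1}<\cdots<i_{|{\bld u}|}\le k-2}\sum_{\psi_{\bld u}}\prod_{m=1}^{|{\bld u}|}\Big(\frac{\Phi_{\psi_{\bld u}(m)}}{r_{\psi_{\bld u}(m)}}-\frac{\Phi_{q-1}}{r_{q-1}}\Big)\circ\phi^{i_{m}}.
\]
Since $(\partial^{|{\bld u}|}/\partial s_{0}^{u_{0}}\cdots\partial s_{q-2}^{u_{q-2}})\prod_{l}(s_{l}-r_{l})^{u_{l}}\big|_{{\bld s}={\bld r}}={\bldL u}!$, extracting the derivative from this Taylor coefficient produces the prefactor ${\bldL u}!$ of the lemma. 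Integrating against $\mu_{{\bld q},{\bld r}}$ on $[0,x]$ establishes the claimed identity for each finite $k$.

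The main obstacle is justifying the interchange of $\partial^{|{\bld u}|}/\partial s_{0}^{u_{0}}\cdots\partial s_{q-2}^{u_{q-2}}$ with the limit $k\to\infty$. Interchange with $\int_{0}^{x}(\cdot)\,d\mu_{{\bld q},{\bld r}}$ is routine since the prelimit integrand is a bounded step function. The outer limit is more delicate: Proposition \ref{weak} supplies only uniform convergence in ${\bldL s}$, which does not by itself permit termwise differentiation. However, each prelimit is a polynomial in ${\bldL s}$ of fixed total degree, and, in view of the real analyticity of $L_{{\bld q},{\bld s}}(x)$ in ${\bldL s}$ asserted in \cite[Theorem 2]{KOSS}, extends holomorphically to a small complex polydisc around ${\bldL s}={\bldL r}$ on which the convergence remains uniform. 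Cauchy's integral formula applied on this polydisc then upgrades uniform convergence to uniform convergence of all partial derivatives, which completes the proof.
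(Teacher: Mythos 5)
Your argument is correct and follows essentially the same route as the paper: both start from Propositions \ref{weak} and \ref{Wproduct} with ${\bldL e}={\bldL d}={\bldL q}$, reduce to differentiating the finite product $\prod_{i=0}^{k-2}W\circ\phi^{i}$ at ${\bldL s}={\bldL r}$, and your Taylor-coefficient extraction from the affine factors is the same computation the paper carries out by rewriting each factor as ${\mathcal K}_{{\bld s},i}/{\mathcal K}_{{\bld r},i}$ and invoking the differentiation argument of \cite{O-S-S}. Your closing discussion of interchanging the derivative with the limit $k\to\infty$ is in fact more explicit than the paper's (which passes from the prelimit identity to the limit without comment), though the claim that the uniform convergence of Proposition \ref{weak} persists on a complex polydisc is asserted rather than proved and would need the short geometric estimate $\sup_{x}\big|L_{{\bld q},{\bld s}}(x)-\int_{0}^{x}Z[\cdots;k]\,d\mu_{{\bld q},{\bld r}}\big|\ll(\max_{a}|s_{a}|)^{k-1}$ to be fully rigorous.
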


\begin{proof}
By Propositions \ref{weak} and \ref{Wproduct} with 
${\bldL e}={\bldL d}={\bldL q}$, we have 
\begin{align}
L_{{\bld q},{\bld s}}(x)
=& \lim_{k\to\infty}
\int_{0}^{x}
\prod_{i=0}^{k-2}
W\Big[
\begin{array}{l}
 {\bldL s} \\
 {\bldL r} 
\end{array}\Big]
\circ \phi^{i}
d\mu_{{\bld q},{\bld r}}.\label{integral00}
\end{align}
From the definitions of $W\Big[
\begin{array}{l}
 {\bldL s} \\
 {\bldL r} 
\end{array}\Big]$ and $\Phi_{l}$, it follows that
\begin{align}\label{phili1}
W\Big[
\begin{array}{l}
 {\bldL s} \\
 {\bldL r} 
\end{array}\Big]
\circ \phi^{i}
=& \sum_{l=0}^{q-1}\frac{s_{l}}{r_{l}}(\Phi_{l}\circ\phi^{i}).
\end{align}
Let ${\mathcal K}_{{\bld s},i}
=\sum_{l=0}^{q-1}s_{l}(\Phi_{l}\circ\phi^{i})$.
We show the equality
\begin{align}\label{phili2}
\sum_{l=0}^{q-1}\frac{s_{l}}{r_{l}}(\Phi_{l}\circ\phi^{i}) 
=\frac{{\mathcal K}_{{\bld s},i}}{{\mathcal K}_{{\bld r},i}}.
\end{align}
Since $\sum_{l=0}^{q-1}\Phi_{l}=1$, it holds that 
$\sum_{l=0}^{q-1}\Phi_{l}\circ\phi^{i}=1$. 
For any $x\in I$, there exists a unique $m$ such that 
$\Phi_{m}\circ\phi^{i}(x)=1,\ \Phi_{l}\circ\phi^{i}(x)=0 \ (l\ne m)$, 
and hence, both of 
$\sum_{l=0}^{q-1}\frac{s_{l}}{r_{l}}(\Phi_{l}\circ\phi^{i}(x))$
and $\frac{{\mathcal K}_{{\bld s},i}}{{\mathcal K}_{{\bld r},i}}(x)$
are $\frac{s_{m}}{r_{m}}$.
Comibining \eqref{integral00}, \eqref{phili1}, and \eqref{phili2},
we have
\begin{align}\label{Lqs}
L_{{\bld q},{\bld s}}(x)
=& \lim_{k\to\infty}
\int_{0}^{x}
\prod_{i=0}^{k-2}
\frac{{\mathcal K}_{{\bld s},i}}{{\mathcal K}_{{\bld r},i}}
d\mu_{{\bld q},{\bld r}}.
\end{align}
For $a$ with $0\le a\le q-2$,
$$
\frac{\partial}{\partial s_{a}}\frac{{\mathcal K}_{{\bld s},i}}{{\mathcal K}_{{\bld r},i}}
=\frac{(\Phi_{a}-\Phi_{q-1})\circ\phi^{i}}
{{\mathcal K}_{{\bld r},i}}.
$$
By the same argument as in \cite[pp.459--460]{O-S-S}, 
\begin{align*}
& \frac{\partial^{u_{0}+\cdots+u_{q-2}}}
{\partial s_{0}^{u_{0}}\cdots \partial s_{q-2}^{u_{q-2}}}
\int_{0}^{x}\prod_{i=0}^{k-2}
\frac{{\mathcal K}_{{\bld s},i}}{{\mathcal K}_{{\bld r},i}}d\mu_{{\bld q},{\bld r}}\nonumber\\
& ={\bldL u}!\sum_{0\le i_{1}<\cdots <i_{|\bld u|}\le k-2}
\sum_{\psi_{\bld u}}\int_{0}^{x}
\Big(\prod_{m=1}^{|\bld u|}
\frac{(\Phi_{\psi_{\bld u}(m)}-\Phi_{q-1})\circ
\phi^{i_{m}}}
{{\mathcal K}_{{\bld s},i_{m}}}\Big)\times
\Big(\prod_{i=0}^{k-2}\frac{{\mathcal K}_{{\bld s},i}}
{{\mathcal K}_{{\bld r},i}}\Big)
d\mu_{{\bld q},{\bld r}}.
\end{align*}
Hence
\begin{align}
& \frac{\partial^{u_{0}+\cdots+u_{q-2}}}
{\partial s_{0}^{u_{0}}\cdots \partial s_{q-2}^{u_{q-2}}}
\int_{0}^{x}\prod_{i=0}^{k-2}
\frac{{\mathcal K}_{{\bld s},i}}{{\mathcal K}_{{\bld r},i}}d\mu_{{\bld q},{\bld r}}
\Big|_{{\bld s}={\bld r}}\nonumber\\
& ={\bldL u}!\sum_{0\le i_{1}<\cdots <i_{|\bld u|}\le k-2}\sum_{\psi_{\bld u}}
\int_{0}^{x}\prod_{m=1}^{|\bld u|}
\Big(\frac{\Phi_{\psi_{\bld u}(m)}}{r_{\psi_{\bld u}(m)}}
-\frac{\Phi_{q-1}}{r_{q-1}}\Big)\circ\phi^{i_{m}}
d\mu_{{\bld q},{\bld r}}.\label{partial}
\end{align}
From \eqref{Lqs} and \eqref{partial}, the assertion follows.
\end{proof}

By Lemmas \ref{HDchange2} and \ref{Hsrq}, and 
$u_{j}({\bldL u}-{\bldL e}_{j})!={\bldL u}!$, we obtain the following.

\begin{proposition}\label{HLsrq}
{\rm (i)} If ${\bldL u}={\bldL e}_{l}$, then
\begin{eqnarray*}
\frac{1}{q}\frac{\partial}{\partial s_{l}}
L_{\bld s}(x)\Big|_{{\bld s}={\bld r}}
&=& ({\bf 1}_{I_{1}(l)}(x)-{\bf 1}_{I_{1}(q-1)}(x))
(L_{{\bld q},{\bld r}}(x)-x)\\
& &+\Big(\sum_{n=0}^{q-1}r_{n}{\bf 1}_{I_{1}(n)}(x)\Big)
\lim_{k\to\infty}\sum_{0\le j\le k-2}\int_{0}^{x}
\Big(\frac{\Phi_{l}}{r_{l}}
-\frac{\Phi_{q-1}}{r_{q-1}}\Big)
\circ\phi^{j}d\mu_{{\bld q},{\bld r}}\\
& &+\int_{0}^{x}({\bf 1}_{I_{1}(l)}-{\bf 1}_{I_{1}(q-1)})
d\mu.
\end{eqnarray*}
{\rm (ii)} If $|{\bldL u}|\ge 2$, then
\begin{eqnarray*}
& & \frac{1}{q{\bldL u}!}\frac{\partial^{u_{0}+\cdots +u_{q-2}}}
{\partial s_{0}^{u_{0}}\cdots \partial s_{q-2}^{u_{q-2}}}
L_{\bld s}(x)\Big|_{{\bld s}={\bld r}}
=\sum_{\stackrel{\scriptstyle{j=0}}{u_{j}>0}}^{q-2}
({\bf 1}_{I_{1}(j)}(x)-{\bf 1}_{I_{1}(q-1)}(x))\\
& &\qquad\times\lim_{k\to\infty}
\sum_{0\le i_{1}<\cdots <i_{|\bld u|-1}\le k-2} \ \sum_{\psi_{{\bld u}-{\bld e}_{j}}}
\int_{0}^{x}\prod_{m=1}^{|\bld u|-1}
\Big(\frac{\Phi_{\psi_{{\bld u}-{\bld e}_{j}}(m)}}{r_{\psi_{{\bld u}-{\bld e}_{j}}(m)}}
-\frac{\Phi_{q-1}}{r_{q-1}}\Big)\circ\phi^{i_{m}}d\mu_{{\bld q},{\bld r}}\\
& &\quad+\Big(\sum_{n=0}^{q-1}r_{n}{\bf 1}_{I_{1}(n)}(x)\Big)
\lim_{k\to\infty}\sum_{0\le i_{1}<\cdots <i_{|\bld u|}\le k-2}\sum_{\psi_{\bld u}}
\int_{0}^{x}\prod_{m=1}^{|\bld u|}
\Big(\frac{\Phi_{\psi_{{\bld u}}(m)}}{r_{\psi_{{\bld u}}(m)}}
-\frac{\Phi_{q-1}}{r_{q-1}}\Big)
\circ\phi^{i_{m}}d\mu_{{\bld q},{\bld r}}.
\end{eqnarray*}
\end{proposition}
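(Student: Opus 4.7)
The plan is to chain Lemma \ref{HDchange2} with Lemma \ref{Hsrq} and then evaluate at $\bldL s=\bldL r$. The only nontrivial bookkeeping is the combinatorial identity $u_{j}(\bldL u-\bldL e_{j})!=\bldL u!$ (already noted in the paragraph preceding the proposition), which is exactly what is needed to reconcile the $u_{j}$-weighted sum appearing on the right of Lemma \ref{HDchange2}(ii) with the unweighted sum stated in the proposition.

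For part (i), I would begin from Lemma \ref{HDchange2}(i). Its first and third terms on the right-hand side do not depend on $\bldL s$, so they carry over unchanged when we set $\bldL s=\bldL r$. The middle term contains $\frac{\partial}{\partial s_{l}}L_{\bldL q,\bldL s}(x)\big|_{\bldL s=\bldL r}$, which is precisely the $\bldL u=\bldL e_{l}$ case of Lemma \ref{Hsrq} (with $\bldL e_{l}!=1$). Substituting gives the formula claimed in (i).

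For part (ii), I would apply Lemma \ref{HDchange2}(ii) and treat its two terms separately. In the second term, after setting $\bldL s=\bldL r$, the derivative $\frac{\partial^{|\bldL u|}}{\partial s_{0}^{u_{0}}\cdots\partial s_{q-2}^{u_{q-2}}}L_{\bldL q,\bldL s}\big|_{\bldL s=\bldL r}$ is evaluated by Lemma \ref{Hsrq} to $\bldL u!$ times the nested sum over $0\le i_{1}<\cdots<i_{|\bldL u|}\le k-2$ and $\psi_{\bldL u}$; the factor $\bldL u!$ cancels against the $1/\bldL u!$ on the left. In the first term, for each $j$ with $u_{j}>0$ the accompanying $(|\bldL u|-1)$-st derivative of $L_{\bldL q,\bldL s}$ (with the $s_{j}$-order lowered by one) is evaluated by Lemma \ref{Hsrq} with multi-index $\bldL u-\bldL e_{j}$, producing a prefactor $(\bldL u-\bldL e_{j})!$ together with the $(|\bldL u|-1)$-fold double sum over $\psi_{\bldL u-\bldL e_{j}}$. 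After dividing by $\bldL u!$, the identity $u_{j}(\bldL u-\bldL e_{j})!/\bldL u!=1$ removes the $u_{j}$ weight and leaves the clean sum displayed in (ii). I do not anticipate any serious obstacle, since both underlying lemmas are already established; the remaining work is purely this combinatorial cancellation.
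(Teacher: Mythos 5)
Your proposal is correct and coincides with the paper's own (one-line) proof, which obtains Proposition \ref{HLsrq} directly from Lemmas \ref{HDchange2} and \ref{Hsrq} together with the identity $u_{j}({\bldL u}-{\bldL e}_{j})!={\bldL u}!$. The only (harmless) imprecision is your remark that the first term of Lemma \ref{HDchange2}(i) ``does not depend on ${\bldL s}$'' --- it does, through $L_{{\bld q},{\bld s}}(x)$ --- but it is simply evaluated at ${\bldL s}={\bldL r}$, which is what your substitution effectively does.
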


\section{A recursive relation for
${\mathcal D}_{{\bld d},{\bld r},{\bld u},k}(x)$}\label{Deftakagi}

Based on the expression of Proposition \ref{HLsrq},
we introduce the function
${\mathcal D}_{{\bld d},{\bld r},{\bld u},k}$.

\begin{definition}\label{defD}
The function ${\mathcal D}_{{\bld d},{\bld r},{\bld u},k}:I\to{\bf R}$
is defined by
\begin{align*}
{\mathcal D}_{{\bld d},{\bld r},{\bld u},k}(x)
=\frac{1}{q}\sum_{0\le i_{1}<\cdots <i_{|\bld u|}\le k}\sum_{\psi_{\bld u}}
\int_{0}^{x}\prod_{m=1}^{|\bld u|}
\Big(\frac{\Phi_{\psi_{\bld u}(m)}}{r_{\psi_{\bld u}(m)}}
-\frac{\Phi_{q-1}}{r_{q-1}}\Big)\circ\phi^{i_{m}}d\mu_{{\bld d},{\bld r}}.
\end{align*}
\end{definition}
We will give a recursive relation for
${\mathcal D}_{{\bld d},{\bld r},{\bld u},k}(x)$ (see Proposition
\ref{diffprop1} below),
which gives the definition of generalized Takagi functions. 

\begin{lemma}\label{difflem1}
For any $k, \beta \in{\bf N}$ with $\beta+2>k$, 
and integers $l,k$ with $0 \leq l \leq q-1,\ 0 \leq n \leq q^{k}-1$,
we have
$$
{\rm E}_{\mu_{{\bld d},{\bld r}}}
\Big(\Big(\frac{\Phi_{l}}{r_{l}}-\frac{\Phi_{q-1}}{r_{q-1}}\Big)
\circ\phi^{\beta};I_{k}(n)\Big)=0.
$$
\end{lemma}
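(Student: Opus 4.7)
The plan is to apply the change-of-variable identity of Lemma \ref{phiE} (with $i=\beta$) on the pieces that make up $I_{k}(n)$, converting the integral against $\mu_{{\bld d},{\bld r}}$ into an expectation against a ``reset'' measure $\mu_{{\bld r}_{\sigma^{a_{0}}},{\bld r}}$, and then showing that this reset expectation vanishes by direct computation. The hypothesis $\beta+2>k$, i.e.\ $\beta\ge k-1$, splits into two situations: (a) $\beta\ge k$, where $I_{k}(n)$ can be refined into $I_{\beta}$-atoms and the change of variables is applied directly; (b) $\beta=k-1$, where $\phi^{\beta}$ does not cover all $k$ digits and a cleaner alignment must first be achieved by peeling off the lowest digit $n_{0}$.

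The core identity driving the vanishing is that for any integer $a$, Definition \ref{measure} together with $\sigma^{j}\sigma^{-j}=\mathrm{id}$ gives
\[
\mu_{{\bld r}_{\sigma^{a}},{\bld r}}(I_{2}(qj+\sigma^{-j}(l)))=r_{\sigma^{a}(j)}\cdot r_{l},
\]
so that ${\rm E}_{\mu_{{\bld r}_{\sigma^{a}},{\bld r}}}(\Phi_{l})=r_{l}\sum_{j=0}^{q-1}r_{\sigma^{a}(j)}=r_{l}$, since $\sigma$ is a permutation. Hence ${\rm E}_{\mu_{{\bld r}_{\sigma^{a}},{\bld r}}}(\Phi_{l}/r_{l}-\Phi_{q-1}/r_{q-1})=0$. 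Because $I_{2}(qj+\sigma^{-j}(l))\subset I_{1}(j)$, we also have $\Phi_{l}\cdot{\bf 1}_{I_{1}(n_{0})}={\bf 1}_{I_{2}(qn_{0}+\sigma^{-n_{0}}(l))}$, so the same calculation yields
\[
{\rm E}_{\mu_{{\bld r}_{\sigma^{a}},{\bld r}}}\Big(\Big(\frac{\Phi_{l}}{r_{l}}-\frac{\Phi_{q-1}}{r_{q-1}}\Big){\bf 1}_{I_{1}(n_{0})}\Big)=0.
\]

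With these inputs, case (a) is handled by decomposing $I_{k}(n)=\bigsqcup_{a_{k},\ldots,a_{\beta-1}}I_{\beta}(a_{\beta-1}\cdots a_{k}n_{k-1}\cdots n_{0})$ and applying Lemma \ref{phiE} to each atom (with $a_{0}=n_{0}$): each contribution equals $\mu_{{\bld d},{\bld r}}(I_{\beta}(\cdots))\cdot{\rm E}_{\mu_{{\bld r}_{\sigma^{n_{0}}},{\bld r}}}(\Phi_{l}/r_{l}-\Phi_{q-1}/r_{q-1})=0$, and summing finishes this case. For case (b), I would use Lemma \ref{phicom} to write ${\bf 1}_{I_{k}(n)}={\bf 1}_{I_{k-1}(n_{k-1}\cdots n_{1})}\cdot({\bf 1}_{I_{1}(n_{0})}\circ\phi^{k-1})$, so that the integrand becomes $((\Phi_{l}/r_{l}-\Phi_{q-1}/r_{q-1}){\bf 1}_{I_{1}(n_{0})})\circ\phi^{k-1}$ restricted to the $I_{k-1}$-atom; Lemma \ref{phiE} with $i=k-1$ and $a_{0}=n_{1}$, together with the restricted key computation, again returns $0$. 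The main obstacle is exactly the edge case (b): the trick of peeling off $n_{0}$ via Lemma \ref{phicom} is what makes the change of variables applicable, and the cancellation then hinges on the fortuitous $r_{l}$-linearity of $\mu_{{\bld r}_{\sigma^{a}},{\bld r}}(I_{2}(qn_{0}+\sigma^{-n_{0}}(l)))$.
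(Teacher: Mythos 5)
Your proof is correct in substance and rests on the same computation as the paper's: the identity $\mu_{{\bld r}_{\sigma^{a}},{\bld r}}(I_{2}(qj+\sigma^{-j}(l)))=r_{\sigma^{a}(j)}r_{l}$, which after summing over $j$ (or keeping only $j=n_{0}$ in the restricted version) makes the reset expectation of $\Phi_{l}/r_{l}-\Phi_{q-1}/r_{q-1}$ vanish. The paper runs this as one uniform computation --- expand $\Phi_{l}$, push $\circ\,\phi^{\beta}$ through with Lemma \ref{phicom} to get a union of $I_{\beta+2}$-cylinders ending in the digit $\sigma^{-b_{1}}(l)$, intersect with $I_{k}(n)$, and factor out $r_{l}$ with Lemma \ref{phimeasure} --- whereas you encapsulate the same factorization inside Lemma \ref{phiE} and split into two cases. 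Your explicit handling of the edge case $\beta=k-1$ is, if anything, more careful than the paper's, whose summation notation becomes degenerate there.

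One slip to fix in case (a): with the paper's digit convention, refining $I_{k}(n_{k-1}\cdots n_{0})$ into $I_{\beta}$-atoms appends the new digits on the \emph{right} (less significant) end, i.e.\ $I_{k}(n)=\bigcup_{c}I_{\beta}(n_{k-1}\cdots n_{0}c_{\beta-k-1}\cdots c_{0})$; the atoms $I_{\beta}(a_{\beta-1}\cdots a_{k}n_{k-1}\cdots n_{0})$ that you wrote are not subsets of $I_{k}(n)$ when $\beta>k$ (recall from Lemma \ref{phiaction} that $\phi$ strips the leading digit). Consequently the digit entering Lemma \ref{phiE} is the free digit $c_{0}$, not $n_{0}$, whenever $\beta>k$. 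This does not damage the argument, because your key expectation ${\rm E}_{\mu_{{\bld r}_{\sigma^{a}},{\bld r}}}(\Phi_{l}/r_{l}-\Phi_{q-1}/r_{q-1})=0$ holds for every $a$, but the stated decomposition and the parenthetical ``with $a_{0}=n_{0}$'' should be corrected.
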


\begin{proof}
For the $q$-adic representations $n=n_{k-1}\cdots n_{0}$ 
and $qj+\sigma^{-j}(l)=j\sigma^{-j}(l)$, we have 
\begin{align}\label{Ezero}
& {\rm E}_{\mu_{{\bld d},{\bld r}}}
\Big(\Big(\frac{\Phi_{l}}{r_{l}}-\frac{\Phi_{q-1}}{r_{q-1}}\Big)
\circ\phi^{\beta};I_{k}(n_{k-1}\cdots n_{0})\Big)\nonumber\\
& =\frac{1}{r_{l}}\sum_{j=0}^{q-1}
{\rm E}_{\mu_{{\bld d},{\bld r}}}
({\bf 1}_{I_{2}(j\sigma^{-j}(l))}\circ\phi^{\beta};I_{k}(n_{k-1}\cdots n_{0}))\nonumber\\
& \quad -\frac{1}{r_{q-1}}\sum_{j=0}^{q-1}
{\rm E}_{\mu_{{\bld d},{\bld r}}}
({\bf 1}_{I_{2}(j\sigma^{-j}(q-1))}\circ\phi^{\beta};I_{k}(n_{k-1}\cdots n_{0})).
\end{align}
By Lemma \ref{phicom},
\begin{align*}
& \sum_{j=0}^{q-1}{\rm E}_{\mu_{{\bld d},{\bld r}}}
({\bf 1}_{I_{2}(j\sigma^{-j}(l))}\circ\phi^{\beta};I_{k}(n_{k-1}\cdots n_{0}))\\
& ={\rm E}_{\mu_{{\bld d},{\bld r}}}
({\bf 1}_{\bigcup_{0\le b_{2},\ldots,b_{\beta+1}\le q-1} 
\bigcup_{b_{1}=0}^{q-1}
I_{\beta +2}(b_{\beta +1}\cdots b_{2}b_{1}
\sigma^{-b_{1}}(l))};I_{k}(n_{k-1}\cdots n_{0}))\\
& ={\rm E}_{\mu_{{\bld d},{\bld r}}}
({\bf 1}_{\bigcup_{0\le b_{1},\ldots,b_{\beta-k+1}\le q-1} 
I_{\beta +2}(n_{k-1}\cdots n_{0}b_{\beta -k+1}\cdots b_{1}\sigma^{-b_{1}}(l))})\\
& =\sum_{0\le b_{1},\ldots,b_{\beta-k+1}\le q-1} 
\mu_{{\bld d},{\bld r}}(I_{\beta +2}(n_{k-1}\cdots 
n_{0}b_{\beta -k+1}
\cdots b_{1}\sigma^{-b_{1}}(l))).
\end{align*}
Here, by Lemma \ref{phimeasure},
\begin{align*}
& \mu_{{\bld d},{\bld r}}(I_{\beta +2}(n_{k-1}\cdots n_{0}b_{\beta -k+1}
\cdots b_{1}\sigma^{-b_{1}}(l)))\\
& =\mu_{{\bld d},{\bld r}}(I_{\beta +1}(n_{k-1}\cdots n_{0}b_{\beta -k+1}
\cdots b_{1}))\mu_{{\bld r}_{\sigma^{b_{1}}},{\bld r}}(I_{1}(\sigma^{-b_{1}}(l)))\\
& =r_{l}\mu_{{\bld d},{\bld r}}(I_{\beta +1}(n_{k-1}\cdots n_{0}b_{\beta -k+1}\cdots b_{1})),
\end{align*}
and hence
\begin{align}\label{Ereduce}
\sum_{j=0}^{q-1}{\rm E}_{\mu_{{\bld d},{\bld r}}}
({\bf 1}_{I_{2}(j\sigma^{-j}(l))}\circ\phi^{\beta};I_{k}(n_{k-1}\cdots n_{0}))
=r_{l}\mu_{{\bld d},{\bld r}}(I_{k}(n_{k-1}\cdots n_{0})).
\end{align}
Substituting \eqref{Ereduce} into \eqref{Ezero}, we obtain 
the assertion.
\end{proof}

\begin{lemma}\label{difflem2}
For any ${\bldL u}$ with $|{\bldL u}|\ge 2$ and $k\in{\bf N}$, let 
$\{\beta_{m}\}_{m=1}^{|{\bld u}|}$ be a strictly increasing
sequence with $\beta_{1}+2>k$. Then we have 
$$
{\rm E}_{\mu_{{\bld d},{\bld r}}}
\Big(\prod_{m=1}^{|\bld u|}
\Big(\frac{\Phi_{\psi_{\bld u}(m)}}{r_{\psi_{\bld u}(m)}}
-\frac{\Phi_{q-1}}{r_{q-1}}\Big)\circ\phi^{\beta_{m}}
;I_{k}(n)\Big)=0 
$$
for every $0 \leq n \leq q^{k}-1$. 
\end{lemma}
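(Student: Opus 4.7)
The plan is to apply Lemma \ref{difflem1} to the single factor with the largest exponent $\phi^{\beta_{|\bld u|}}$, treating the remaining product together with ${\bf 1}_{I_{k}(n)}$ as a coefficient that is constant on suitably fine cells. Set
\begin{align*}
 h&=\prod_{m=1}^{|\bld u|-1}\Big(\frac{\Phi_{\psi_{\bld u}(m)}}{r_{\psi_{\bld u}(m)}}-\frac{\Phi_{q-1}}{r_{q-1}}\Big)\circ\phi^{\beta_{m}},\\
 g&=\Big(\frac{\Phi_{\psi_{\bld u}(|\bld u|)}}{r_{\psi_{\bld u}(|\bld u|)}}-\frac{\Phi_{q-1}}{r_{q-1}}\Big)\circ\phi^{\beta_{|\bld u|}},
\end{align*}
so that the integrand in the statement is $hg$.

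First I would record the measurability of $h$. Since $\Phi_{l}$ is a finite sum of indicators of $I_{2}$-cells, it is $\mathcal F_{2}$-measurable, and Lemma \ref{phicom} expresses ${\bf 1}_{I_{2}(\cdots)}\circ\phi^{\beta}$ as the indicator of a union of $I_{\beta+2}$-cells; hence each factor of $h$ is $\mathcal F_{\beta_{m}+2}$-measurable, and $h$ itself is $\mathcal F_{\beta_{|\bld u|-1}+2}$-measurable. The hypothesis $\beta_{1}+2>k$ combined with $\beta_{|\bld u|-1}\ge\beta_{1}$ further gives $\mathcal F_{k}\subset\mathcal F_{\beta_{|\bld u|-1}+2}$, so ${\bf 1}_{I_{k}(n)}h$ is $\mathcal F_{\beta_{|\bld u|-1}+2}$-measurable as well.

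Next I would apply Lemma \ref{difflem1} to $g$, with the $k$ there replaced by $\beta_{|\bld u|-1}+2$ and with $\beta=\beta_{|\bld u|}$; the required inequality $\beta_{|\bld u|}+2>\beta_{|\bld u|-1}+2$ is just strict monotonicity of $\{\beta_{m}\}$. This yields
\[
 {\rm E}_{\mu_{{\bld d},{\bld r}}}\big(g;I_{\beta_{|\bld u|-1}+2}(n')\big)=0\quad\text{for every }n'.
\]
Now $I_{k}(n)$ is a disjoint union of cells $I_{\beta_{|\bld u|-1}+2}(n')$, and ${\bf 1}_{I_{k}(n)}h$ takes a constant value $C_{n'}$ on each such cell; exactly as in the proof of Lemma \ref{bubun} this gives
\[
 {\rm E}_{\mu_{{\bld d},{\bld r}}}\big({\bf 1}_{I_{k}(n)}hg;I_{\beta_{|\bld u|-1}+2}(n')\big)=C_{n'}\,{\rm E}_{\mu_{{\bld d},{\bld r}}}\big(g;I_{\beta_{|\bld u|-1}+2}(n')\big)=0.
\]
Summing over the $n'$ that refine $n$ and noting ${\bf 1}_{I_{k}(n)}=1$ on $I_{k}(n)$ produces ${\rm E}_{\mu_{{\bld d},{\bld r}}}(hg;I_{k}(n))=0$, which is the assertion.

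The only conceptual point is the measurability of $h$; once it is recorded, the multi-factor claim collapses to the already-proved single-factor identity of Lemma \ref{difflem1}, and no induction on $|\bld u|$ is needed. I expect the only minor subtlety to be keeping track of which $\sigma$-field the product of the first $|\bld u|-1$ factors lives in, which is why isolating the \emph{largest} index $\beta_{|\bld u|}$ (rather than $\beta_{1}$) is essential.
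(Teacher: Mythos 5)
Your argument is correct, but it takes a genuinely different route from the paper's. The paper peels off the \emph{first} factor $\bigl(\Phi_{\psi_{\bld u}(1)}/r_{\psi_{\bld u}(1)}-\Phi_{q-1}/r_{q-1}\bigr)\circ\phi^{\beta_{1}}$: being $\mathcal{F}_{\beta_{1}+2}$-measurable, it is constant on the cells $I_{\beta_{1}+2}(n_{k-1}\cdots n_{0}b_{\beta_{1}-k+1}\cdots b_{0})$ refining $I_{k}(n)$, so the expectation becomes a linear combination of expectations of the shorter product (re-indexed via the correspondence $\psi_{\bld u}\leftrightarrow\psi_{{\bld u}-{\bld e}_{\alpha}}$) over those finer cells; since $\beta_{2}+2>\beta_{1}+2$, the hypothesis is reproduced with $k$ replaced by $\beta_{1}+2$, and the step is iterated $|{\bldL u}|-1$ times until Lemma \ref{difflem1} applies. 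You instead isolate the \emph{last} factor and absorb all the others, together with ${\bf 1}_{I_{k}(n)}$, into a single $\mathcal{F}_{\beta_{|{\bld u}|-1}+2}$-measurable coefficient (the inclusion $\mathcal{F}_{k}\subset\mathcal{F}_{\beta_{|{\bld u}|-1}+2}$ follows from $\beta_{1}+2>k$), and then invoke Lemma \ref{difflem1} exactly once, at cell level $\beta_{|{\bld u}|-1}+2$ with shift $\beta_{|{\bld u}|}$, the required inequality being just $\beta_{|{\bld u}|}>\beta_{|{\bld u}|-1}$. Both arguments are sound and rest on the same two ingredients (Lemma \ref{phicom} for measurability and Lemma \ref{difflem1} for the vanishing); yours is shorter because it dispenses with the iteration and with the $\psi_{\bld u}$ bookkeeping, which the paper sets up here chiefly because the same shift identity \eqref{sift} is reused in the proof of Proposition \ref{diffprop1}.
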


\begin{proof}
Set $\alpha=\psi_{\bld u}(1)$. Then $u_{\alpha}>0$ by the definition of $\psi_{\bld u}$.
We classify the set of $\psi_{\bld u}$'s by $\alpha$.
By the definition of $\psi_{\bld u}$,
\begin{align*}
& \psi_{\bld u}:\{1,2,\ldots,|{\bldL u}|\}\longrightarrow \{0,1,\ldots,q-2\},\\
& \#\{2\le m\le |{\bldL u}|;\psi_{{\bld u}}(m)=j \}
=\begin{cases}
u_{j}-1, & \mbox{if $j=\alpha$},\\
u_{j}, & \mbox{if $j\neq\alpha$},
\end{cases}
\end{align*}
and 
\begin{align*}
& \psi_{{\bld u}-{\bld e}_{\alpha}}:\{1,2,\ldots,|{\bldL u}|-1\}
\longrightarrow \{0,1,\ldots,q-2\},\\
& \#\{1\le m\le |{\bldL u}|-1;\psi_{{\bld u}-{\bld e}_{\alpha}}(m)=j \}
=\begin{cases}
u_{j}-1, & \mbox{if $j=\alpha$},\\
u_{j}, & \mbox{if $j\neq\alpha$}.
\end{cases}
\end{align*}
Hence, for any $\psi_{\bld u}$ there exists a unique 
$\psi_{{\bld u}-{\bld e}_{\alpha}}$ such that
\begin{equation}\label{sift}
\psi_{\bld u}(m)=\psi_{{\bld u}-{\bld e}_{\alpha}}(m-1),\quad 2\le m\le |{\bldL u}|.
\end{equation}
It follows from \eqref{sift} that
\begin{align}
& {\rm E}_{\mu_{{\bld d},{\bld r}}}
\Big(\prod_{m=1}^{|\bld u|}
\Big(\frac{\Phi_{\psi_{\bld u}(m)}}{r_{\psi_{\bld u}(m)}}
-\frac{\Phi_{q-1}}{r_{q-1}}\Big)\circ\phi^{\beta_{m}}
;I_{k}(n)\Big)\nonumber\\
& ={\rm E}_{\mu_{{\bld d},{\bld r}}}
\Big(\Big(\Big(\frac{\Phi_{\psi_{\bld u}(1)}}{r_{\psi_{\bld u}(1)}}
-\frac{\Phi_{q-1}}{r_{q-1}}\Big)\circ\phi^{\beta_{1}}\Big)
\times \Big(\prod_{m=2}^{|\bld u|}
\Big(\frac{\Phi_{\psi_{\bld u}(m)}}{r_{\psi_{\bld u}(m)}}
-\frac{\Phi_{q-1}}{r_{q-1}}\Big)\circ\phi^{\beta_{m}}\Big)
;I_{k}(n)\Big)\nonumber\\
& ={\rm E}_{\mu_{{\bld d},{\bld r}}}
\Big(\Big(\Big(\frac{\Phi_{\alpha}}{r_{\alpha}}
-\frac{\Phi_{q-1}}{r_{q-1}}\Big)\circ\phi^{\beta_{1}}\Big)
\times \Big(\prod_{m=1}^{|\bld u|-1}
\Big(\frac{\Phi_{\psi_{{\bld u}-{\bld e}_{\alpha}}(m)}}
{r_{\psi_{{\bld u}-{\bld e}_{\alpha}}(m)}}
-\frac{\Phi_{q-1}}{r_{q-1}}\Big)\circ\phi^{\beta_{m+1}}\Big)
;I_{k}(n)\Big).\label{expect00}
\end{align}
Here we express $I_{k}(n_{k-1}\cdots n_{0})$, $n=n_{k-1}\cdots n_{0}$, 
as 
\begin{align}\label{expect11}
I_{k}(n_{k-1}\cdots n_{0})=
\bigcup_{0 \leq b_{0},\cdots,b_{\beta_{1}-k+1} \leq q-1}
I_{\beta_{1}+2}(n_{k-1}\cdots n_{0}b_{\beta_{1}-k+1}\cdots b_{0}).
\end{align}
Since $\Big(\frac{\Phi_{\alpha}}{r_{\alpha}} 
-\frac{\Phi_{q-1}}{r_{q-1}}\Big)\circ\phi^{\beta_{1}}$ 
in \eqref{expect00} is 
${\mathcal F}_{\beta_{1}+2}$-measurable (see Lemma \ref{phicom}),
it takes a constant value $C_{b_{\beta_{1}-k+1}\cdots b_{0}}$ on
$I_{\beta_{1}+2}(n_{k-1}\cdots n_{0}b_{\beta_{1}-k+1}\cdots b_{0})$.
Hence, by \eqref{expect00} and \eqref{expect11}, 
\begin{align*}
& {\rm E}_{\mu_{{\bld d},{\bld r}}}
\Big(\prod_{m=1}^{|\bld u|}
\Big(\frac{\Phi_{\psi_{\bld u}(m)}}{r_{\psi_{\bld u}(m)}}
-\frac{\Phi_{q-1}}{r_{q-1}}\Big)\circ\phi^{\beta_{m}}
;I_{k}(n)\Big)\\
& =\sum_{0 \leq b_{0},\cdots,b_{\beta_{1}-k+1} \leq q-1}
   C_{b_{\beta_{1}-k+1}\cdots b_{0}}\\
& \quad\times{\rm E}_{\mu_{{\bld d},{\bld r}}}
\Big(\prod_{m=1}^{|\bld u|-1}
\Big(\frac{\Phi_{\psi_{{\bld u}-{\bld e}_{\alpha}}(m)}}{r_{\psi_{{\bld u}-{\bld e}_{\alpha}}(m)}}
-\frac{\Phi_{q-1}}{r_{q-1}}\Big)\circ\phi^{\beta_{m+1}}
;I_{\beta_{1}+2}(n_{k-1}\cdots n_{0}b_{\beta_{1}-k+1}\cdots b_{0})\Big).
\end{align*}
By repeating this $|{\bldL u}|-1$ times, there exists
an integer $l$ with $0\le l\le q-2$ such that
$
{\rm E}_{\mu_{{\bld d},{\bld r}}}
\Big(\prod_{m=1}^{|\bld u|}
\Big(\frac{\Phi_{\psi_{\bld u}(m)}}{r_{\psi_{\bld u}(m)}}
-\frac{\Phi_{q-1}}{r_{q-1}}\Big)\circ\phi^{\beta_{m}}
;I_{k}(n)\Big)
$
is a linear combination of 
$
{\rm E}_{\mu_{{\bld d},{\bld r}}}
\Big(\Big(\frac{\Phi_{l}}{r_{l}}
-\frac{\Phi_{q-1}}{r_{q-1}}\Big)\circ\phi^{\beta_{|{\bld u}|}}
;I_{\beta_{|\bld u|-1}+2}(n')\Big)
$
over $n'$'s. Therefore the assertion follows from Lemma \ref{difflem1}.
\end{proof}

By Lemmas \ref{difflem1} and \ref{difflem2} with $k=1$, 
we have easily the following.

\begin{lemma}\label{difflem3}
For any ${\bldL u}$ with $|{\bldL u}|\ge 1$ and
$\{\beta_{m}\}_{m=1}^{|{\bld u}|}$ with 
$0\le \beta_{1}<\beta_{2}<\cdots<\beta_{|{\bld u}|}$, we have
$$
{\rm E}_{\mu_{{\bld d},{\bld r}}}
\Big(\prod_{m=1}^{|\bld u|}
\Big(\frac{\Phi_{\psi_{\bld u}(m)}}{r_{\psi_{\bld u}(m)}}
-\frac{\Phi_{q-1}}{r_{q-1}}\Big)\circ\phi^{\beta_{m}}\Big)=0.
$$
\end{lemma}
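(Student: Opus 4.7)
The plan is to reduce the full-space expectation to a sum of the localized expectations appearing in Lemmas \ref{difflem1} and \ref{difflem2}, and then specialize those lemmas to $k = 1$, which is precisely the threshold at which the side condition $\beta_{1} + 2 > k$ becomes the free hypothesis $\beta_{1} \geq 0$.

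First I would use the partition $I = \bigsqcup_{n=0}^{q-1} I_{1}(n)$ (disjoint up to a single endpoint of $\mu_{\mathbf{d},\mathbf{r}}$-measure zero) to write
\begin{align*}
\mathrm{E}_{\mu_{\mathbf{d},\mathbf{r}}}\Big(\prod_{m=1}^{|\mathbf{u}|}
\Big(\frac{\Phi_{\psi_{\mathbf{u}}(m)}}{r_{\psi_{\mathbf{u}}(m)}}
- \frac{\Phi_{q-1}}{r_{q-1}}\Big)\circ \phi^{\beta_{m}}\Big)
= \sum_{n=0}^{q-1}\mathrm{E}_{\mu_{\mathbf{d},\mathbf{r}}}
\Big(\prod_{m=1}^{|\mathbf{u}|}
\Big(\frac{\Phi_{\psi_{\mathbf{u}}(m)}}{r_{\psi_{\mathbf{u}}(m)}}
- \frac{\Phi_{q-1}}{r_{q-1}}\Big)\circ \phi^{\beta_{m}}; I_{1}(n)\Big).
\end{align*}
Now I would split on $|\mathbf{u}|$. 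If $|\mathbf{u}| = 1$, each summand on the right is of the shape treated by Lemma \ref{difflem1}; with $k = 1$ the hypothesis $\beta + 2 > k$ is automatic from $\beta_{1} \geq 0$, so every summand vanishes. If $|\mathbf{u}| \geq 2$, each summand has the shape treated by Lemma \ref{difflem2}; again with $k = 1$ the hypothesis $\beta_{1} + 2 > k$ is automatic, so every summand vanishes. Either way the total sum is zero, which is the claimed identity.

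There is no real obstacle here: the substance has already been absorbed into Lemmas \ref{difflem1} and \ref{difflem2}. The only point to verify is the numerical compatibility of the side condition with $k = 1$, which is why the excerpt is able to assert the statement follows ``easily.''
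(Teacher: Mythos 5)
Your argument is correct and is exactly the paper's intended route: the paper proves this lemma with the single remark ``By Lemmas \ref{difflem1} and \ref{difflem2} with $k=1$, we have easily the following,'' and your decomposition ${\rm E}_{\mu_{{\bld d},{\bld r}}}(f)=\sum_{n=0}^{q-1}{\rm E}_{\mu_{{\bld d},{\bld r}}}(f;I_{1}(n))$ together with the observation that $\beta_{1}+2>1$ is automatic is precisely what that remark leaves implicit. (Minor point: the intervals $I_{1}(n)$ as defined are genuinely disjoint, so no measure-zero caveat is needed.)
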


\begin{proposition}\label{diffprop1}
%
{\rm (i)} If ${\bldL u}={\bldL e}_{l}$, then
\begin{align*}
{\mathcal D}_{{\bld d},{\bld r},{\bld e}_{l},k}(x)
=\frac{1}{q}\sum_{j=0}^{k}\sum_{n=0}^{q^{j}-1}
\mu_{{\bld d},{\bld r}}(I_{j}(n)){\bf 1}_{I_{j}(n)}(x)
\int_{0}^{\phi^{j}(x)}\Big(\frac{\Phi_{l}}{r_{l}}
-\frac{\Phi_{q-1}}{r_{q-1}}\Big)d\mu_{{\bld r}_{\sigma^{n}},{\bld r}}.
\end{align*}
\noindent{\rm (ii)} If $|{\bldL u}|\ge 2$, then
\begin{align*}
{\mathcal D}_{{\bld d},{\bld r},{\bld u},k}(x)
=& \sum_{j=0}^{k-|{\bld u}|+1}
\sum_{\stackrel{\scriptstyle{\alpha=0}}{u_{\alpha}>0}}^{q-2}
\Big(\Big(\frac{\Phi_{\alpha}}{r_{\alpha}}
-\frac{\Phi_{q-1}}{r_{q-1}}\Big)\circ\phi^{j}(x)\Big)\\
& \times\sum_{n=0}^{q^{j+1}-1}
\mu_{{\bld d},{\bld r}}(I_{j+1}(n)){\bf 1}_{I_{j+1}(n)}(x)
\Big({\mathcal D}_{{\bld r}_{\sigma^{n}},{\bld r},{\bld u}
-{\bld e}_{\alpha},k-j-1}
\circ\phi^{j+1}(x)\Big).
\end{align*}
\end{proposition}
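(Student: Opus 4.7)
My strategy is to expand the integrals in Definition \ref{defD} along a cylinder decomposition and then apply the substitution formulas (Lemmas \ref{phiE}--\ref{phiEx}), the orthogonality Lemmas \ref{difflem1}--\ref{difflem3}, and, in part (ii), the ``pull-out'' Lemma \ref{bubun}. In both parts the guiding principle is that every cylinder strictly to the left of $x$ contributes zero, so that only the cylinder containing $x$ survives.

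For part (i), I would fix $j$ and partition $[0,x]=\bigsqcup_{n=0}^{q^{j}-1}(I_{j}(n)\cap[0,x])$. For $I_{j}(n)\subset[0,x)$, Lemma \ref{phiE} rewrites the contribution as $\mu_{{\bld d},{\bld r}}(I_{j}(n))\int_{I}(\Phi_{l}/r_{l}-\Phi_{q-1}/r_{q-1})\,d\mu_{{\bld r}_{\sigma^{n}},{\bld r}}$, and the latter integral vanishes because $\int_{I}\Phi_{m}\,d\mu_{{\bld r}_{\sigma^{n}},{\bld r}}=r_{m}$ for every $m$ (a direct computation from the definition of $\Phi_{m}$, in the spirit of Lemma \ref{difflem1}). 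The unique cylinder $I_{j}(n_{x})$ containing $x$ yields, via Lemma \ref{phiEx}, precisely the summand asserted by the proposition, while cylinders lying strictly to the right of $x$ contribute nothing. Summing over $j=0,\ldots,k$ gives part (i).

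For part (ii), I would regroup the outer sum by setting $j:=i_{1}$ and $\alpha:=\psi_{{\bld u}}(1)$, using the bijection \eqref{sift} between $\psi_{{\bld u}}$ with $\psi_{{\bld u}}(1)=\alpha$ and $\psi_{{\bld u}-{\bld e}_{\alpha}}$, and reindexing $j'_{m}:=i_{m+1}-(j+1)$ so that the tail factors become $H\circ\phi^{j+1}$ with
\[
 H:=\prod_{m=1}^{|{\bld u}|-1}\Big(\frac{\Phi_{\psi_{{\bld u}-{\bld e}_{\alpha}}(m)}}{r_{\psi_{{\bld u}-{\bld e}_{\alpha}}(m)}}-\frac{\Phi_{q-1}}{r_{q-1}}\Big)\circ\phi^{j'_{m}}.
\]
The key observation is that on each cylinder $I_{j+1}(n_{j}\cdots n_{0})$ the factor $(\Phi_{\alpha}/r_{\alpha}-\Phi_{q-1}/r_{q-1})\circ\phi^{j}$, although $\mathcal{F}_{j+2}$-measurable, factors through $\phi^{j+1}$ as $h_{\alpha,n_{0}}\circ\phi^{j+1}$ for the $\mathcal{F}_{1}$-measurable function
\[
 h_{\alpha,n_{0}}:=\frac{1}{r_{\alpha}}{\bf 1}_{I_{1}(\sigma^{-n_{0}}(\alpha))}-\frac{1}{r_{q-1}}{\bf 1}_{I_{1}(\sigma^{-n_{0}}(q-1))}.
\]
Decomposing $[0,x]$ along level-$(j+1)$ cylinders and invoking Lemma \ref{phiE} (resp.\ Lemma \ref{phiEx}), each cylinder-integral becomes a $\mu_{{\bld r}_{\sigma^{n_{0}}},{\bld r}}$-integral of $h_{\alpha,n_{0}}H$ over $I$ (resp.\ over $[0,\phi^{j+1}(x)]$). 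Since Lemmas \ref{difflem1} and \ref{difflem2} (applied with $k=1$) give $E_{\mu_{{\bld r}_{\sigma^{n_{0}}},{\bld r}}}(H\mid\mathcal{F}_{1})=0$, Lemma \ref{bubun} pulls $h_{\alpha,n_{0}}$ out as a point value. For cylinders strictly to the left of $x$ this point value multiplies $\int_{0}^{1}H\,d\mu=0$ (Lemma \ref{difflem3}), so those terms drop. For the cylinder containing $x$, the pulled-out value $h_{\alpha,n_{x,0}}(\phi^{j+1}(x))$ equals $(\Phi_{\alpha}/r_{\alpha}-\Phi_{q-1}/r_{q-1})(\phi^{j}(x))$, and the residual double sum over $(j'_{1},\ldots,j'_{|{\bld u}|-1})$ and $\psi_{{\bld u}-{\bld e}_{\alpha}}$ is by Definition \ref{defD} equal to $q\,\mathcal{D}_{{\bld r}_{\sigma^{n}},{\bld r},{\bld u}-{\bld e}_{\alpha},k-j-1}(\phi^{j+1}(x))$, giving the asserted recursion.

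I anticipate the main obstacle to be part (ii)'s bookkeeping: identifying $(\Phi_{\alpha}/r_{\alpha}-\Phi_{q-1}/r_{q-1})\circ\phi^{j}$ as an $\mathcal{F}_{1}$-measurable function of $\phi^{j+1}$ on each level-$(j+1)$ cylinder so that Lemma \ref{bubun} applies with the correct base measure $\mu_{{\bld r}_{\sigma^{n_{0}}},{\bld r}}$, checking the cancellation on all cylinders to the left of $x$ via Lemma \ref{difflem3}, and verifying that the re-indexed ranges of the $j'_{m}$ match Definition \ref{defD} exactly at the smaller multi-index ${\bld u}-{\bld e}_{\alpha}$ and truncation level $k-j-1$.
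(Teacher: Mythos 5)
Your proposal is correct and follows essentially the same route as the paper: for (i) the decomposition of $[0,x]$ into level-$j$ cylinders with the left cylinders killed by the vanishing expectation (Lemma \ref{difflem1}) and the surviving one handled by Lemma \ref{phiEx}; for (ii) the regrouping by $j=i_{1}$, $\alpha=\psi_{\bld u}(1)$ via \eqref{sift}, the pull-out of the leading factor by Lemma \ref{bubun}, and the localization via Lemmas \ref{phiE}, \ref{phiEx}, \ref{difflem2}, \ref{difflem3}. The only cosmetic difference is that the paper applies Lemma \ref{bubun} upstairs on $I$ with the $\mathcal{F}_{j+2}$-measurable factor before pushing forward, whereas you push forward to $\mu_{{\bld r}_{\sigma^{n_{0}}},{\bld r}}$ first and apply it with $\mathcal{F}_{1}$; the two orderings are equivalent.
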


\begin{proof}
Taking ${\bldL u}={\bldL e}_{l}$ in Definition \ref{defD}, we have
\begin{align*}
{\mathcal D}_{{\bld d},{\bld r},{\bld e}_{l},k}(x)
=\frac{1}{q}\sum_{j=0}^{k}
\int_{0}^{x}\Big(\frac{\Phi_{l}}{r_{l}}
-\frac{\Phi_{q-1}}{r_{q-1}}\Big)\circ\phi^{j}d\mu_{{\bld d},{\bld r}}.
\end{align*}
If $x \in I_{j}(n)$, then, by Lemmas \ref{difflem1} and \ref{phiEx},
\begin{eqnarray*}
\int_{0}^{x}\Big(\frac{\Phi_{l}}{r_{l}}
-\frac{\Phi_{q-1}}{r_{q-1}}\Big)\circ\phi^{j}d\mu_{{\bld d},{\bld r}}
&=& {\rm E}_{\mu_{{\bld d},{\bld r}}}
\Big(\Big(\frac{\Phi_{l}}{r_{l}}-\frac{\Phi_{q-1}}{r_{q-1}}\Big)
\circ\phi^{j};I_{j}(n)\cap[0,x]\Big)\\
&=& \mu_{{\bld d},{\bld r}}(I_{j}(n))
\int_{0}^{\phi^{j}(x)}\Big(\frac{\Phi_{l}}{r_{l}}
-\frac{\Phi_{q-1}}{r_{q-1}}\Big)d\mu_{{\bld r}_{\sigma^{n}},{\bld r}},
\end{eqnarray*}
which gives (i).

We express the sum $\sum_{0\le i_{1}<\cdots <i_{|\bld u|}\le k}$
in Definition \ref{defD} as
$
\sum_{j=0}^{k-|{\bld u}|+1}
\sum_{j+1\le i_{2}<\cdots <i_{|\bld u|}\le k},
$
then, set $i'_{m-1}=i_{m}-j-1$. Then we have, by \eqref{sift},
\begin{eqnarray*}
{\mathcal D}_{{\bld d},{\bld r},{\bld u},k}(x)
&=& \frac{1}{q}\sum_{j=0}^{k-|{\bld u}|+1}
\sum_{0\le i'_{1}<\cdots <i'_{|\bld u|-1}\le k-j-1}\sum_{\psi_{\bld u}}\\
& & 
\int_{0}^{x}\Big(\Big(
\frac{\Phi_{\psi_{\bld u}(1)}}{r_{\psi_{\bld u}(1)}}
-\frac{\Phi_{q-1}}{r_{q-1}}\Big)\circ\phi^{j}\Big)\times
\Big(\prod_{m=2}^{|\bld u|}
\Big(\frac{\Phi_{\psi_{\bld u}(m)}}{r_{\psi_{\bld u}(m)}}
-\frac{\Phi_{q-1}}{r_{q-1}}\Big)\circ\phi^{i'_{m-1}+j+1}\Big)
d\mu_{{\bld d},{\bld r}}\\
&=& \frac{1}{q}\sum_{j=0}^{k-|{\bld u}|+1}
\sum_{0\le i'_{1}<\cdots <i'_{|\bld u|-1}\le k-j-1}
\sum_{\stackrel{\scriptstyle{\alpha=0}}{u_{\alpha}>0}}^{q-2}
\sum_{\psi_{{\bld u}-{\bld e}_{\alpha}}}\\
& & 
\int_{0}^{x}\Big(\Big(
\frac{\Phi_{\alpha}}{r_{\alpha}}
-\frac{\Phi_{q-1}}{r_{q-1}}\Big)\circ\phi^{j}\Big)\times
\Big(\prod_{m=1}^{|\bld u|-1}
\Big(\frac{\Phi_{\psi_{{\bld u}-{\bld e}_{\alpha}}(m)}}{r_{\psi_{{\bld u}-{\bld e}_{\alpha}}(m)}}
-\frac{\Phi_{q-1}}{r_{q-1}}\Big)\circ\phi^{i'_{m}+j+1}\Big)
d\mu_{{\bld d},{\bld r}}.
\end{eqnarray*}
By Lemma \ref{phicom},
$\Big(\frac{\Phi_{\alpha}}{r_{\alpha}}
-\frac{\Phi_{q-1}}{r_{q-1}}\Big)\circ\phi^{j}$ is 
${\mathcal F}_{j+2}$-measurable. 
From \eqref{kinji00} and Lemmas \ref{difflem1} and \ref{difflem2}, 
it follows that  
$$
{\rm E}_{\mu_{{\bld d},{\bld r}}}
\Big(\prod_{m=1}^{|\bld u|-1}
\Big(\frac{\Phi_{\psi_{{\bld u}-{\bld e}_{\alpha}}(m)}}{r_{\psi_{{\bld u}-{\bld e}_{\alpha}}(m)}}
-\frac{\Phi_{q-1}}{r_{q-1}}\Big)\circ\phi^{i'_{m}+j+1}\Big|\mathcal{F}_{j+2}\Big)=0.
$$
Hence we have, by Lemma \ref{bubun},
\begin{eqnarray}\label{Eeq1}
{\mathcal D}_{{\bld d},{\bld r},{\bld u},k}(x)
&=& \frac{1}{q}\sum_{j=0}^{k-|{\bld u}|+1}
\sum_{0\le i'_{1}<\cdots <i'_{|\bld u|-1}\le k-j-1}
\sum_{\stackrel{\scriptstyle{\alpha=0}}{u_{\alpha}>0}}^{q-2}
\sum_{\psi_{{\bld u}-{\bld e}_{\alpha}}}
\Big(\Big(\frac{\Phi_{\alpha}}{r_{\alpha}}
-\frac{\Phi_{q-1}}{r_{q-1}}\Big)\circ\phi^{j}(x)\Big)\nonumber\\
& & \times{\rm E}_{\mu_{{\bld d},{\bld r}}}
\Big(\prod_{m=1}^{|\bld u|-1}
\Big(\frac{\Phi_{\psi_{{\bld u}-{\bld e}_{\alpha}}(m)}}{r_{\psi_{{\bld u}-{\bld e}_{\alpha}}(m)}}
-\frac{\Phi_{q-1}}{r_{q-1}}\Big)\circ\phi^{i'_{m}}\circ\phi^{j+1};[0,x]\Big).
\end{eqnarray}
Lemmas \ref{phiE} and \ref{difflem3} give, for every $l$, 
\begin{align}\label{Eeq2}
& {\rm E}_{\mu_{{\bld d},{\bld r}}}
\Big(\prod_{m=1}^{|\bld u|-1}
\Big(\frac{\Phi_{\psi_{{\bld u}-{\bld e}_{\alpha}}(m)}}{r_{\psi_{{\bld u}-{\bld e}_{\alpha}}(m)}}
-\frac{\Phi_{q-1}}{r_{q-1}}\Big)\circ\phi^{i'_{m}}\circ\phi^{j+1};I_{j+1}(l)\Big)\nonumber\\
& = \mu_{{\bld d},{\bld r}}(I_{j+1}(l))
{\rm E}_{\mu_{{\bld r}_{\sigma^{l}},{\bld r}}}
\Big(\prod_{m=1}^{|\bld u|-1}
\Big(\frac{\Phi_{\psi_{{\bld u}-{\bld e}_{\alpha}}(m)}}{r_{\psi_{{\bld u}-{\bld e}_{\alpha}}(m)}}
-\frac{\Phi_{q-1}}{r_{q-1}}\Big)\circ\phi^{i'_{m}}\Big)=0.
\end{align}
Lemma \ref{phiEx} gives, for $x\in I_{j+1}(n)$, 
\begin{align}\label{Eeq3}
& {\rm E}_{\mu_{{\bld d},{\bld r}}}
\Big(\prod_{m=1}^{|\bld u|-1}
\Big(\frac{\Phi_{\psi_{{\bld u}-{\bld e}_{\alpha}}(m)}}{r_{\psi_{{\bld u}-{\bld e}_{\alpha}}(m)}}
-\frac{\Phi_{q-1}}{r_{q-1}}\Big)\circ\phi^{i'_{m}}\circ\phi^{j+1};I_{j+1}(n)\cap[0,x]\Big)
\nonumber\\
& = \mu_{{\bld d},{\bld r}}(I_{j+1}(n))
{\rm E}_{\mu_{{\bld r}_{\sigma^{n}},{\bld r}}}
\Big(\prod_{m=1}^{|\bld u|-1}
\Big(\frac{\Phi_{\psi_{{\bld u}-{\bld e}_{\alpha}}(m)}}{r_{\psi_{{\bld u}-{\bld e}_{\alpha}}(m)}}
-\frac{\Phi_{q-1}}{r_{q-1}}\Big)\circ\phi^{i'_{m}};[0,\phi^{j+1}(x)]\Big).
\end{align}
Combining \eqref{Eeq1}, \eqref{Eeq2}, \eqref{Eeq3}, and Definition \ref{defD},
we obtain for $x\in I_{j+1}(n)$ 
\begin{align*}
{\mathcal D}_{{\bld d},{\bld r},{\bld u},k}(x)
=& \sum_{j=0}^{k-|{\bld u}|+1}
\sum_{\stackrel{\scriptstyle{\alpha=0}}{u_{\alpha}>0}}^{q-2}
\Big(\Big(\frac{\Phi_{\alpha}}{r_{\alpha}}
-\frac{\Phi_{q-1}}{r_{q-1}}\Big)\circ\phi^{j}(x)\Big)\\
& \times\mu_{{\bld d},{\bld r}}(I_{j+1}(n))
\Big({\mathcal D}_{{\bld r}_{\sigma^{n}},{\bld r},{\bld u}
-{\bld e}_{\alpha},k-j-1}
\circ\phi^{j+1}(x)\Big),
\end{align*}
which gives (ii).
\end{proof}

\section{Completion of the proof of Theorem  \ref{takagiTH}}\label{Complete00}

\begin{lemma}\label{appro00}
For any ${\bldL u}$ with $|{\bldL u}|\ge 1$, we have
\begin{align*}
\max_{{\bld u}',|{\bld u}'|=|{\bld u}|}
\sup_{{\bld d}'}\|{\mathcal T}_
{{\bld d}',{\bld r},{\bld u}'}\|_{\infty}\le 
\frac{(q-1)^{|{\bld u}|-1}}{q\displaystyle{\max_{0\le a\le q-1}\{r_{a}\}}}
\Big(\frac{2}{\displaystyle{\min_{0\le a\le q-1}\{r_{a}\}}}
\frac{1}{1-\displaystyle{\max_{0\le a\le q-1}\{r_{a}\}}}\Big)^{|{\bld u}|},
\end{align*}
where $\| \cdot \|_{\infty}$ means the supremum norm for $x\in I$.
\end{lemma}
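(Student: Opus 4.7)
The plan is induction on $|{\bldL u}|$. Write $M=\max_{0\le a\le q-1}r_{a}$ and $\rho=\min_{0\le a\le q-1}r_{a}$. Two uniform estimates will be used repeatedly: since each $\Phi_{l}$ takes values in $\{0,1\}$ one has $\|\Phi_{l}/r_{l}-\Phi_{q-1}/r_{q-1}\|_{\infty}\le 2/\rho$; and from Definition~\ref{measure} the estimate $\mu_{{\bld d}',{\bld r}}(I_{k}(n))\le M^{k-1}$ holds for every $k\ge 1$ and every admissible ${\bldL d}'$, since exactly one of the $k$ factors defining that measure is a $d$-value (bounded by $1$) and the remaining $k-1$ are $r$-values bounded by $M$.

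For the base case $|{\bldL u}|=1$, write ${\bldL u}={\bldL e}_{l}$ and expand by Definition~\ref{gTakagi}(i). For fixed $x$ and $j$ only the unique $n$ with $x\in I_{j}(n)$ contributes to the inner sum. Bound the integral by $(2/\rho)\,\mu_{{\bld r}_{\sigma^{n}},{\bld r}}([0,\phi^{j}(x)])\le 2/\rho$, and use $\mu_{{\bld d}',{\bld r}}(I_{j}(n))\le M^{j-1}$ (the case $j=0$ being absorbed by the crude inequality $1\le M^{-1}$). The geometric series $\sum_{j\ge 0}M^{j-1}=1/(M(1-M))$ then produces the announced bound $2/(qM\rho(1-M))$.

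For the inductive step $|{\bldL u}|\ge 2$, apply Definition~\ref{gTakagi}(ii). For fixed $x$, the inner $n$-sum collapses to the unique $n$ with $x\in I_{j+1}(n)$; the $\alpha$-sum runs over at most $q-1$ indices; the prefactor $|(\Phi_{\alpha}/r_{\alpha}-\Phi_{q-1}/r_{q-1})\circ\phi^{j}(x)|$ is bounded by $2/\rho$; the measure factor by $\mu_{{\bld d}',{\bld r}}(I_{j+1}(n))\le M^{j}$; and the remaining factor $|\mathcal{T}_{{\bld r}_{\sigma^{n}},{\bld r},{\bld u}-{\bld e}_{\alpha}}(\phi^{j+1}(x))|$ is controlled by the inductive hypothesis, applied with first parameter ${\bldL d}'={\bldL r}_{\sigma^{n}}$. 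Summing the geometric series $\sum_{j\ge 0}M^{j}=1/(1-M)$ yields the recursion
\[
B_{|{\bldL u}|}\le (q-1)\cdot\frac{2}{\rho(1-M)}\cdot B_{|{\bldL u}|-1},
\]
which iterated from the base case reproduces the stated inequality exactly.

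The structural point requiring care is the uniformity in ${\bldL d}'$ of the inductive statement: the recursion forces the bound to be re-entered at the next level with ${\bldL d}'={\bldL r}_{\sigma^{n}}$, a ${\bldL d}$-vector that varies with $n$, so the supremum over ${\bldL d}'$ in the lemma is exactly what makes the induction close. The only bookkeeping subtlety is the crude step $1\le M^{-1}$ used in the base case to absorb the $j=0$ term into the series $\sum_{j\ge 0}M^{j-1}$; this is the origin of the $1/\max_{a}r_{a}$ factor in the announced inequality.
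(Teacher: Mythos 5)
Your proposal is correct and follows essentially the same route as the paper: termwise bounds $\|\Phi_{l}/r_{l}-\Phi_{q-1}/r_{q-1}\|_{\infty}\le 2/\min_a r_a$ and $\mu_{{\bld d}',{\bld r}}(I_k(n))\le(\max_a r_a)^{k-1}$, geometric summation over $j$, and induction on $|{\bldL u}|$ via the recursion $B_{|{\bld u}|}\le\frac{2(q-1)}{\rho(1-M)}B_{|{\bld u}|-1}$, with the $\sup_{{\bld d}'}$ closing the induction because the recursive definition re-enters with ${\bldL d}'={\bldL r}_{\sigma^{n}}$. Your accounting of the $j=0$ term and of the origin of the $1/\max_a r_a$ factor matches the paper's computation exactly.
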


\begin{proof}
Define
${\mathcal M}_{{\bld d},{\bld r},{\bld u},j}(x)$
by 
\begin{align*}
{\mathcal M}_{{\bld d},{\bld r},{\bld u},j}(x)=
\begin{cases}
\displaystyle{\frac{1}{q}\sum_{n=0}^{q^{j}-1}
\mu_{{\bld d},{\bld r}}(I_{j}(n)){\bf 1}_{I_{j}(n)}(x)
\int_{0}^{\phi^{j}(x)}\Big|\frac{\Phi_{l}}{r_{l}}
-\frac{\Phi_{q-1}}{r_{q-1}}\Big|d\mu_{{\bld r}_{\sigma^{n}},{\bld r}}},
& {\bldL u}={\bldL e}_{l},\\
\displaystyle{
\sum_{\stackrel{\scriptstyle{\alpha=0}}{u_{\alpha}>0}}^{q-2}
\Big|\Big(\frac{\Phi_{\alpha}}{r_{\alpha}}
-\frac{\Phi_{q-1}}{r_{q-1}}\Big)\circ\phi^{j}(x)\Big|}\\
\displaystyle{
\times\sum_{n=0}^{q^{j+1}-1}
\mu_{{\bld d},{\bld r}}(I_{j+1}(n)){\bf 1}_{I_{j+1}(n)}(x)
|{\mathcal T}_{{\bld r}_{\sigma^{n}},{\bld r},{\bld u}-{\bld e}_{\alpha}}
\circ\phi^{j+1}(x)|}, & |{\bldL u}|\ge 2.
\end{cases}
\end{align*}
Since $|\Phi_{l}(x)|\le 1$, we have for $j\in{\bf N}\cup\{0\}$ 
\begin{equation}\label{bound00}
\Big|\Big(\frac{\Phi_{l}}{r_{l}}
-\frac{\Phi_{q-1}}{r_{q-1}}\Big)\circ\phi^{j}(x)\Big|
\le \frac{2}{\displaystyle{\min_{0\le a\le q-1}\{r_{a}\}}}.
\end{equation}
Fix $x\in I$. 
For every $j$, there exits an $m_{j}$ such that $x \in I_{j}(m_{j})$.
Then, by \eqref{bound00}, 
\begin{align*}
{\mathcal M}_{{\bld d},{\bld r},{\bld e}_{l},j}(x)
& \le \frac{1}{q}\mu_{{\bld d},{\bld r}}(I_{j}(m_{j}))
\frac{2}{\displaystyle{\min_{0\le a\le q-1}\{r_{a}\}}}
\mu_{{\bld r}_{\sigma^{m_{j}}},{\bld r}}([0,\phi^{j}(x)])\\
& \le 
\frac{2}{\displaystyle{q\min_{0\le a\le q-1}\{r_{a}\}}}
(\max_{0\le a\le q-1}\{d_{a}\})
(\max_{0\le a\le q-1}\{r_{a}\})^{j-1}\\
& \le 
\frac{2}{\displaystyle{q\min_{0\le a\le q-1}\{r_{a}\}}}
(\max_{0\le a\le q-1}\{r_{a}\})^{j-1}.
\end{align*}
Hence
\begin{align}\label{norm0}
\sum_{j=0}^{\infty}\|
{\mathcal M}_{{\bld d},{\bld r},{\bld e}_{l},j}\|_{\infty}
\le\frac{2}{\displaystyle{q\min_{0\le a\le q-1}\{r_{a}\}}}
\frac{(\displaystyle{\max_{0\le a\le q-1}\{r_{a}\}})^{-1}}
{1-\displaystyle{\max_{0\le a\le q-1}\{r_{a}\}}}.
\end{align}
From $\|{\mathcal T}_{{\bld d},{\bld r},{\bld e}_{l}}\|_{\infty}\le
\sum_{j=0}^{\infty}\|
{\mathcal M}_{{\bld d},{\bld r},{\bld e}_{l},j}\|_{\infty}$
and \eqref{norm0}, it follows that
\begin{align}\label{estimate0a}
\sup_{{\bld d}'}\|{\mathcal T}_
{{\bld d}',{\bld r},{\bld e}_{l}}\|_{\infty}\le 
\frac{2}{\displaystyle{q\min_{0\le a\le q-1}\{r_{a}\}}}
\frac{(\displaystyle{\max_{0\le a\le q-1}\{r_{a}\}})^{-1}}
{1-\displaystyle{\max_{0\le a\le q-1}\{r_{a}\}}}.
\end{align}
Fix $x\in I$. 
For every $j$, there exits an $m_{j}$ such that $x \in I_{j+1}(m_{j})$.
Then, by \eqref{bound00}, we have for $|{\bldL u}|\ge 2$
\begin{align*}
{\mathcal M}_{{\bld d},{\bld r},{\bld u},j}(x)
& \le \frac{2}{\displaystyle{\min_{0\le a\le q-1}\{r_{a}\}}}
\mu_{{\bld d},{\bld r}}(I_{j+1}(m_{j}))
\sum_{\stackrel{\scriptstyle{\alpha=0}}{u_{\alpha}>0}}^{q-2}
|{\mathcal T}_{{\bld r}_{\sigma^{m_{j}}},{\bld r},{\bld u}
-{\bld e}_{\alpha}}
\circ\phi^{j+1}(x)|\\
& \le \frac{2(q-1)}{\displaystyle{\min_{0\le a\le q-1}\{r_{a}\}}}
(\max_{0\le a\le q-1}\{d_{a}\})
(\max_{0\le a\le q-1}\{r_{a}\})^{j}
\max_{{\bld u}',|{\bld u}'|=|{\bld u}|-1}
|{\mathcal T}_{{\bld r}_{\sigma^{m_{j}}},{\bld r},{\bld u}'}
\circ\phi^{j+1}(x)|\\
& \le \frac{2(q-1)}{\displaystyle{\min_{0\le a\le q-1}\{r_{a}\}}}
(\max_{0\le a\le q-1}\{r_{a}\})^{j}
\max_{{\bld u}',|{\bld u}'|=|{\bld u}|-1}\sup_{{\bld d}'}
\|{\mathcal T}_{{\bld d}',{\bld r},{\bld u}'}\|_{\infty}.
\end{align*}
Hence
\begin{align}\label{norm1}
\sum_{j=0}^{\infty}\|
{\mathcal M}_{{\bld d},{\bld r},{\bld u},j}\|_{\infty}
\le  \frac{2(q-1)}{\displaystyle{\min_{0\le a\le q-1}\{r_{a}\}}}
\frac{1}{1-\displaystyle{\max_{0\le a\le q-1}\{r_{a}\}}}
\max_{{\bld u}',|{\bld u}'|=|{\bld u}|-1}
\sup_{{\bld d}'}
\|{\mathcal T}_{{\bld d}',{\bld r},{\bld u}'}\|_{\infty}.
\end{align}
From $\|{\mathcal T}_{{\bld d},{\bld r},{\bld u}}\|_{\infty}\le
\sum_{j=0}^{\infty}\|
{\mathcal M}_{{\bld d},{\bld r},{\bld u},j}\|_{\infty}$
and \eqref{norm1}, it follows that
\begin{align*}
\max_{{\bld u}',|{\bld u}'|=|{\bld u}|}
\sup_{{\bld d}'}\|{\mathcal T}_
{{\bld d}',{\bld r},{\bld u}'}\|_{\infty}\le \frac{2(q-1)}{\displaystyle{\min_{0\le a\le q-1}\{r_{a}\}}}
\frac{1}{1-\displaystyle{\max_{0\le a\le q-1}\{r_{a}\}}}
\max_{{\bld u}',|{\bld u}'|=|{\bld u}|-1}
\sup_{{\bld d}'}
\|{\mathcal T}_{{\bld d}',{\bld r},{\bld u}'}\|_{\infty}.
\end{align*}
By repeating this $|{\bldL u}|-1$ times,
there exists an integer $l$ with $0\le l\le q-1$ such that
\begin{align}\label{norm3}
\max_{{\bld u}',|{\bld u}'|=|{\bld u}|}
\sup_{{\bld d}'}\|{\mathcal T}_
{{\bld d}',{\bld r},{\bld u}'}\|_{\infty}\le 
\Big(\frac{2(q-1)}{\displaystyle{\min_{0\le a\le q-1}\{r_{a}\}}}
\frac{1}{1-\displaystyle{\max_{0\le a\le q-1}\{r_{a}\}}}\Big)^{|{\bld u}|-1}
\sup_{{\bld d}'}\|{\mathcal T}_{{\bld d}',{\bld r},{\bld e}_{l}}\|_{\infty}.
\end{align}
Combining \eqref{norm3} with \eqref{estimate0a}, we obtain
the assertion.
\end{proof}

\begin{lemma}\label{last00}
For any ${\bldL u}$ with $|{\bldL u}|\ge 1$, we have
\begin{align*}
& \max_{{\bld u}',|{\bld u}'|=|{\bld u}|}\sup_{{\bld d}'}
\Big\|{\mathcal T}_{{\bld d}',{\bld r},{\bld u}'}
-{\mathcal D}_{{\bld d}',{\bld r},{\bld u}',k}\Big\|_{\infty}
\le P_{|{\bld u}|-1}(k)(\max_{0\le a\le q-1}\{r_{a}\})^{k},
\end{align*}
where $P_{|{\bld u}|-1}(k)$ is a polynomial of $k$ 
with degree $|{\bldL u}|-1$. 
\end{lemma}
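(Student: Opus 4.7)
I would prove Lemma \ref{last00} by induction on $|\bldL{u}|$, comparing the infinite series defining $\mathcal{T}_{\bld{d},\bld{r},\bld{u}}$ (Definition \ref{gTakagi}) with its truncation $\mathcal{D}_{\bld{d},\bld{r},\bld{u},k}$ (Proposition \ref{diffprop1}). At each level the difference splits naturally into two pieces: a truncation tail coming from summands with outer index $j$ exceeding $k-|\bldL{u}|+1$, and a nested approximation error from replacing $\mathcal{T}_{\bld{r}_{\sigma^n},\bld{r},\bld{u}-\bld{e}_\alpha}$ by $\mathcal{D}_{\bld{r}_{\sigma^n},\bld{r},\bld{u}-\bld{e}_\alpha,k-j-1}$ inside the surviving summands. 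The tail will be handled by the same kind of geometric estimate used in the proof of Lemma \ref{appro00}; the nested part will be controlled by the induction hypothesis.

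\textbf{Base case $|\bldL{u}|=1$.} With $\bldL{u}=\bldL{e}_l$, Definition \ref{gTakagi}(i) and Proposition \ref{diffprop1}(i) give $\mathcal{T}_{\bld{d},\bld{r},\bld{e}_l}-\mathcal{D}_{\bld{d},\bld{r},\bld{e}_l,k}=\frac{1}{q}\sum_{j=k+1}^{\infty}\sum_{n=0}^{q^{j}-1}\mu_{\bld{d},\bld{r}}(I_j(n)){\bf 1}_{I_j(n)}\int_0^{\phi^j(\cdot)}(\Phi_l/r_l-\Phi_{q-1}/r_{q-1})\,d\mu_{\bld{r}_{\sigma^n},\bld{r}}$. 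For each $x$ only one $n$ contributes; Lemma \ref{phimeasure} yields $\mu_{\bld{d},\bld{r}}(I_j(n))\le(\max_a r_a)^{j-1}$, and the integrand is uniformly bounded in absolute value by $2/\min_a r_a$ as in \eqref{bound00}, so the tail sums geometrically to a constant multiple of $(\max_a r_a)^k$, uniformly in $\bldL{d}$ and in $l$. This is the desired bound with $P_0$ a constant polynomial.

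\textbf{Inductive step and main obstacle.} Assume the lemma holds for exponents of total degree $|\bldL{u}|-1$. Using Proposition \ref{diffprop1}(ii) together with the tail of Definition \ref{gTakagi}(ii), I would write $\mathcal{T}_{\bld{d},\bld{r},\bld{u}}-\mathcal{D}_{\bld{d},\bld{r},\bld{u},k}=A+B$, where $A$ collects the indices $0\le j\le k-|\bldL{u}|+1$ with integrand $\mathcal{T}_{\bld{r}_{\sigma^n},\bld{r},\bld{u}-\bld{e}_\alpha}-\mathcal{D}_{\bld{r}_{\sigma^n},\bld{r},\bld{u}-\bld{e}_\alpha,k-j-1}$, and $B$ collects the tail $j\ge k-|\bldL{u}|+2$ with integrand $\mathcal{T}_{\bld{r}_{\sigma^n},\bld{r},\bld{u}-\bld{e}_\alpha}$. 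Each summand is pointwise controlled by a prefactor $(2/\min_a r_a)(q-1)\mu_{\bld{d},\bld{r}}(I_{j+1}(n))\le C(\max_a r_a)^j$ times the corresponding nested quantity. Applying the induction hypothesis in $A$ bounds each summand by $C\,P_{|\bld{u}|-2}(k-j-1)(\max_a r_a)^{k-1}$; summing over at most $k+1$ values of $j$ yields $C\,k\,P_{|\bld{u}|-2}(k)(\max_a r_a)^{k-1}$, which matches $P_{|\bld{u}|-1}(k)(\max_a r_a)^k$ after absorbing $(\max_a r_a)^{-1}$ into the constants. In $B$, Lemma \ref{appro00} bounds the nested $\mathcal{T}$ by a constant, leaving a geometric tail of order $(\max_a r_a)^k$. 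Uniformity in $\bldL{d}$ and in the particular choice of $\bldL{u}'$ with $|\bldL{u}'|=|\bldL{u}|$ is automatic, since the constants depend only on $q$, $\bldL{r}$, and $|\bldL{u}|$, and use $\max_a d_a\le 1$. The only real obstacle is the bookkeeping of the polynomial degree: the summation in $A$ contributes one extra factor of $k$ per induction level, so one must verify inductively that $\deg P_{|\bld{u}|-1}=\deg P_{|\bld{u}|-2}+1=|\bldL{u}|-1$.
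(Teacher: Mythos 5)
Your proposal is correct and follows essentially the same route as the paper: induction on $|\bldL{u}|$, with the difference split into the tail $j\ge k-|\bldL{u}|+2$ (bounded via Lemma \ref{appro00} and a geometric series) and the truncated part $0\le j\le k-|\bldL{u}|+1$ (bounded via the induction hypothesis), and with the degree of the polynomial increasing by one at each level exactly as in the paper's computation of $\sum_{j=0}^{k-|\bldL{u}|+1}P_{|\bldL{u}|-2}(k-j-1)$.
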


\begin{proof}
We prove this by induction on $|{\bldL u}|$. 
By the same argument as in the proof of Lemma \ref{appro00},
\begin{align*}
|{\mathcal T}_{{\bld d},{\bld r},{\bld e}_{l}}(x)
-{\mathcal D}_{{\bld d},{\bld r},{\bld e}_{l},k}(x)|
\le \sum_{j=k+1}^{\infty}
|{\mathcal M}_{{\bld d},{\bld r},{\bld e}_{l},j}(x)|
\le 
\frac{2}{\displaystyle{q\min_{0\le a\le q-1}\{r_{a}\}}}
\frac{(\displaystyle{\max_{0\le a\le q-1}\{r_{a}\}})^{k}}
{1-\displaystyle{\max_{0\le a\le q-1}\{r_{a}\}}}.
\end{align*}
Hence
\begin{align*}
\sup_{{\bld d}'}\|{\mathcal T}_{{\bld d}',{\bld r},{\bld e}_{l}}
-{\mathcal D}_{{\bld d}',{\bld r},{\bld e}_{l},k}\|_{\infty}
\le 
\frac{2}{\displaystyle{q\min_{0\le a\le q-1}\{r_{a}\}}}
\frac{(\displaystyle{\max_{0\le a\le q-1}\{r_{a}\}})^{k}}
{1-\displaystyle{\max_{0\le a\le q-1}\{r_{a}\}}}.
\end{align*}
Fix $x\in I$. 
For every $j$, there exits an $m_{j}$ such that $x \in I_{j+1}(m_{j})$.
Then, by \eqref{bound00}, we have for $|{\bldL u}|\ge 2$
\begin{align*}
& |{\mathcal T}_{{\bld d},{\bld r},{\bld u}}(x)
-{\mathcal D}_{{\bld d},{\bld r},{\bld u},k}(x)|\\
& \le \sum_{j=k-|{\bld u}|+2}^{\infty}
\sum_{\stackrel{\scriptstyle{\alpha=0}}{u_{\alpha}>0}}^{q-2}
\Big|\Big(\frac{\Phi_{\alpha}}{r_{\alpha}}
-\frac{\Phi_{q-1}}{r_{q-1}}\Big)\circ\phi^{j}(x)\Big|\\
& \qquad \times\sum_{n=0}^{q^{j+1}-1}
\mu_{{\bld d},{\bld r}}(I_{j+1}(n)){\bf 1}_{I_{j+1}(n)}(x)
\Big|{\mathcal T}_{{\bld r}_{\sigma^{n}},{\bld r},{\bld u}
-{\bld e}_{\alpha}}
\circ\phi^{j+1}(x)\Big|\\
& \quad + \sum_{j=0}^{k-|{\bld u}|+1}
\sum_{\stackrel{\scriptstyle{\alpha=0}}{u_{\alpha}>0}}^{q-2}
\Big|\Big(\frac{\Phi_{\alpha}}{r_{\alpha}}
-\frac{\Phi_{q-1}}{r_{q-1}}\Big)\circ\phi^{j}(x)\Big|\\
& \qquad \times\sum_{n=0}^{q^{j+1}-1}
\mu_{{\bld d},{\bld r}}(I_{j+1}(n)){\bf 1}_{I_{j+1}(n)}(x)
\Big|{
{\mathcal T}_{{\bld r}_{\sigma^{n}},{\bld r},{\bld u}
-{\bld e}_{\alpha}}\circ\phi^{j+1}(x)-
\mathcal D}_{{\bld r}_{\sigma^{n}},{\bld r},{\bld u}
-{\bld e}_{\alpha},k-j-1}
\circ\phi^{j+1}(x)\Big|\\
& \le \sum_{j=k-|{\bld u}|+2}^{\infty}
\frac{2}{\displaystyle{\min_{0\le a\le q-1}\{r_{a}\}}}
(\max_{0\le a\le q-1}\{r_{a}\})^{j}
\sum_{\stackrel{\scriptstyle{\alpha=0}}{u_{\alpha}>0}}^{q-2}
\Big\|{\mathcal T}_{{\bld r}_{\sigma^{m_{j}}},{\bld r},{\bld u}
-{\bld e}_{\alpha}}\Big\|_{\infty}\\
& \quad + \sum_{j=0}^{k-|{\bld u}|+1}
\frac{2}{\displaystyle{\min_{0\le a\le q-1}\{r_{a}\}}}
(\max_{0\le a\le q-1}\{r_{a}\})^{j}
\sum_{\stackrel{\scriptstyle{\alpha=0}}{u_{\alpha}>0}}^{q-2}
\Big\|{\mathcal T}_{{\bld r}_{\sigma^{m_{j}}},{\bld r},{\bld u}
-{\bld e}_{\alpha}}
-{\mathcal D}_{{\bld r}_{\sigma^{m_{j}}},{\bld r},{\bld u}
-{\bld e}_{\alpha},k-j-1}\Big\|_{\infty}\\
& \le \frac{2(q-1)}{\displaystyle{\min_{0\le a\le q-1}\{r_{a}\}}}
\frac{(\displaystyle{\max_{0\le a\le q-1}\{r_{a}\}})^{k-|{\bld u}|+2}}
{1-\displaystyle{\max_{0\le a\le q-1}\{r_{a}\}}}
\max_{{\bld u}',|{\bld u}'|=|{\bld u}|-1}\sup_{{\bld d}'}
\Big\|{\mathcal T}_{{\bld d}',{\bld r},{\bld u}'}\Big\|_{\infty}\\
& \quad + 
\frac{2(q-1)}{\displaystyle{\min_{0\le a\le q-1}\{r_{a}\}}}
\sum_{j=0}^{k-|{\bld u}|+1}
(\max_{0\le a\le q-1}\{r_{a}\})^{j}
\max_{{\bld u}',|{\bld u}'|=|{\bld u}|-1}\sup_{{\bld d}'}
\Big\|{\mathcal T}_{{\bld d}',{\bld r},{\bld u}'}
-{\mathcal D}_{{\bld d}',{\bld r},{\bld u}',k-j-1}\Big\|_{\infty}.
\end{align*}
Hence, by Lemma \ref{appro00} and the assumption of induction,
\begin{align*}
& \max_{{\bld u}',|{\bld u}'|=|{\bld u}|}\sup_{{\bld d}'}
\Big\|{\mathcal T}_{{\bld d}',{\bld r},{\bld u}'}
-{\mathcal D}_{{\bld d}',{\bld r},{\bld u}',k}\Big\|_{\infty}\\
& \ll (\max_{0\le a\le q-1}\{r_{a}\})^{k-|{\bld u}|+2}
+ \sum_{j=0}^{k-|{\bld u}|+1}
(\max_{0\le a\le q-1}\{r_{a}\})^{j}
P_{|{\bld u}|-2}(k-j-1)(\max_{0\le a\le q-1}\{r_{a}\})^{k-j-1}\\
& = (\max_{0\le a\le q-1}\{r_{a}\})^{k-|{\bld u}|+2}
+ (\max_{0\le a\le q-1}\{r_{a}\})^{k-1}
\sum_{j=0}^{k-|{\bld u}|+1}P_{|{\bld u}|-2}(k-j-1),
\end{align*}
where the implied constant depends only on $q$, ${\bldL r}$, 
and $|{\bldL u}|$. Since $\sum_{j=0}^{k-|{\bld u}|+1}
P_{|{\bld u}|-2}(k-j-1)$ is a polynominal of $k$ with 
degree $|{\bldL u}|-1$, we obtain the assertion. 
\end{proof}

By Lemma \ref{last00}, we see that 
\begin{equation*}
\lim_{k\to\infty}{\mathcal D}_{{\bld d},{\bld r},{\bld u},k}(x)
={\mathcal T}_{{\bld d},{\bld r},{\bld u}}(x)
\end{equation*}
holds uniformly for $x\in{I}$.
Thus we obtain Theorem \ref{takagiTH} by 
Propositions \ref{HLsrq} and \ref{diffprop1}.
\qed


\end{document}